\theoremstyle{plain}
\newtheorem{theorem}{Theorem}[section] 
\newtheorem{lemma}[theorem]{Lemma} 
\newtheorem{remark}[theorem]{Remark}
\newtheorem{notation}[theorem]{Notation}
\newtheorem{example}[theorem]{Example}
\numberwithin{equation}{section} 
\newcommand{\Cov}{\mathbb{C}\mathrm{ov}}
\newcommand{\e}{\mathrm{e}}
\newcommand{\dif}{\mathrm{d}}
\newcommand{\leb}{\mathrm{L}}
\newcommand{\support}{\mathrm{support}}
\title[\hfill\protect\parbox{0.95\linewidth}{On the approximation of the probability density function  \\  of the randomized heat equation}]{On the approximation of the probability density function of the  randomized heat equation}
\author{J. Calatayud, J.-C. Cort\'{e}s, M. Jornet}
\begin{document}

\maketitle

\begin{center}
\noindent
\address{Instituto Universitario de Matem\'{a}tica Multidisciplinar,\\
Universitat Polit\`{e}cnica de Val\`{e}ncia,\\
Camino de Vera s/n, 46022, Valencia, Spain\\
email: jucagre@alumni.uv.es; jcortes@imm.upv.es; marcjor@alumni.uv.es
}
\end{center}

\begin{abstract}
In this paper we study the randomized heat equation with homogeneous boundary conditions. The diffusion coefficient is assumed to be a random variable and the initial condition is treated as a stochastic process. The solution of this randomized partial differential equation problem is a stochastic process, which is given by a random series obtained via the classical method of separation of variables. Any stochastic process is determined by its finite-dimensional joint distributions. In this paper, the goal is to obtain approximations to the probability density function of the solution (the first finite-dimensional distributions) under mild conditions. Since the solution is expressed as a random series, we perform approximations of its probability density function. We use two approaches: broadly speaking, first, dealing with the random Fourier coefficients of the random series, and second, taking advantage of the Karhunen-Lo\`{e}ve expansion of the initial condition stochastic process. Finally, several numerical examples illustrating the potentiality of our findings with regard to  both approaches are presented. \\
\\
\textit{Keywords:} Stochastic calculus, Random heat equation, Random Variable Transformation technique, Karhunen-Lo\`{e}ve expansion, Probability density function.
\end{abstract}

\section{Introduction}
Differential equations governing real phenomena often contain some mathematical terms (e.g. initial/boundary condition, source term, coefficients),  referred to as model parameters, that characterize physical features of the  problem and its environment. In practice, these terms must be determined from sampling and/or experimentally. Hence they contain  errors coming from different sources such as the lack of accuracy in sampling and/or measurements and  the inherent uncertainty usually met in complex physical phenomena. In that case,  it is more convenient to treat constants and functions playing the role of model parameters as random variables and stochastic processes, respectively. This approach leads to two different class of differential equations with uncertainty, namely Stochastic Differential Equations (SDEs) and Random Differential Equations (RDEs). Although both terms are often used as synonymous, they are distinctly different and  require completely different techniques for analysis and approximation \cite[pp. 97-98]{llibre_smith}.  In the former case, uncertainty is forced by  the differential of  a stochastic process having an irregular sampling behaviour (e.g., continuous but nowhere differentiable such as the differential of  Brownian motion, i.e., the so-called white noise process).  The  analysis of SDEs requires a special calculus, usually  referred to as It\^{o} Calculus, whose cornerstone is It\^{o} Lemma. This important result plays a key role to conduct both theoretical and numerical analysis for both differential and integral It\^{o}-type equations \cite{Oksendal,Kloeden_Platen,Heydari_JCP,Jerez_JCAM}. Under this approach the uncertainty formulated via  the corresponding differential equation is  restricted to specific patterns  (for instance  of gaussian type when noise is driven by white noise). RDEs consist of a direct randomization of all model parameters subject to uncertainty through random variables and/or stochastic processes having a regular trajectories. This approach allows for a wide range of random patterns (binomial, Poisson, hypergeometric, beta, exponential, etc., but also including gaussian distribution). Analysis of RDEs is  based upon the combination of Probability Theory and Newton-Leibniz Calculus, for which powerful tools are well-established. Both facts are very important advantages in favour of  RDEs \cite{Soong}.

In dealing with SDEs/RDEs defined in a complete probability space, say $(\Omega, \mathcal{F},\mathbb{P})$,  as it also happens in the deterministic scenario, the  primary objective is to compute exact or numerically their solution, say $u(x)$, which is a stochastic process instead of a classical function. A distinctive feature of solving SDEs/RDEs, with respect to their deterministic counterpart, is the need to compute relevant probabilistic information of the solution such as the mean function, $\mathbb{E}[u(x)]$, and the variance function, $\mathbb{V}[u(x)]$. While a more and complex ambitious goal is to determine the finite-dimensional probability distributions, particularly the so-called first probability density function, say $f(u,x)$, associated to the solution, since from it one can compute  any one-dimensional statistical moment
\[
\mathbb{E}[(u(x))^k]= 
\int_{-\infty}^{\infty}
u^k f(u,x) \dif u, \qquad k=1,2,\ldots. 
\]
Furthermore, the computation of $f(u,x)$ permits calculating the probability that the solution stochastic process lies within an interval of interest, say $[u_1,u_2]$,
\[
\mathbb{P}[u_1 \leq u(x)(\omega) \leq u_2]= 
\int_{u_1}^{u_2}
 f(u,x) \dif u, \qquad \omega \in \Omega,
\]  
for each $x$ fixed.

The heat equation is a differential statement of thermal energy balance law. It is a basic model to numerous physical phenomena such as diffusion, heat conduction, transport of solutes, etc., but it has also been successfully applied in other apparently unrelated areas like finance to pricing security derivatives traded in the stock market \cite{Thambynayagam, Wilmott}. Impurities and heterogeneity  in the medium (cross section) and error measurements justify the consideration of randomness in both the diffusion coefficient and the initial condition. This motivates us to study the  randomized  heat equation defined on a finite spatial domain whose diffusion coefficient is assumed to be a random variable,  boundary conditions are homogeneous and initial condition is a stochastic process. Different randomizations to heat equation have been studied in the extant literature using different techniques such as generalized polynomial chaos based stochastic Galerkin technique \cite{Jin_Lu_JCP},  homogenization and Monte Carlo approaches \cite{Xu_AMM}, random mean square calculus \cite{JC_AMM_2014}, random collocation method \cite{Wang_IJHMT_2014}, random interval moment method \cite{Wang_IJHMT_2015}, etc.

Our approach is based upon RDEs  and our main goal is to construct reliable approximations to the probability density function of the solution (the first finite-dimensional distributions) under mild conditions. To achieve this target we will combine the application of Random Variable Transformation (RVT) technique and Karhunen-Lo\`{e}ve expansion (KLE). In the context of RDEs, RVT technique has been successfully applied to compute the probability density function of the solution to significant problems in Physics, Biology, etc., assuming specific  distributions for model parameters \cite{Dorini_CNSNS_2016,Dorini_JCP,Selim_AMC_2011,Selim_EPJP_2015} or dealing with general parametric distributions \cite{JC_CNSNS_2014}.  While some recent contributions where the Karhunen-Lo\`{e}ve expansion is applied to solve relevant problems in Physics can be found in \cite{Selim_2013_JQSRT,Xu_AMM_2016}. Other complementary approaches to compute the probability density function of partial differential equations include closure approximations based on functional integral methods \cite{Karniadakis_JCP_2013} and the application of PGD method \cite{Karniadakis_JCP_2016}, for example.

For the sake of completeness,  we  first  introduce the heat problem that will be randomized later, in the deterministic scenario. Then we briefly discuss some interesting issues and results in the deterministic setting that will allow us to compare  better with our findings when dealing with its random formulation. Let us then consider the deterministic heat equation with homogeneous boundary conditions
\begin{equation} \begin{cases} u_t=\alpha^2 u_{xx},\; 0<x<1,\;t>0, \\ u(0,t)=u(1,t)=0,\;t\geq0, \\ u(x,0)=\phi(x),\;0\leq x\leq 1, \end{cases} 
\label{edp_determinista}
\end{equation}
where the diffusion coefficient is $\alpha^2>0$ and the initial condition is given by $\phi(x)$. The formal solution to (\ref{edp_determinista}) is given, using the method of separation of variables, by
\begin{equation}
u(x,t)=\sum_{n=1}^\infty A_n\,\e^{-n^2\pi^2\alpha^2 t}\sin(n\pi x),
\label{sol}
\end{equation}
where the Fourier coefficient
\[ A_n=2\int_0^1 \phi(y) \sin(n\pi y)\,\dif y \]
is understood as a Lebesgue integral. In fact, under simple hypotheses, it can be easily proved that (\ref{sol}) is indeed a classical solution of (\ref{edp_determinista}).
\begin{theorem}
If $\phi$ is continuous on $[0,1]$, piecewise $C^1$ on $[0,1]$ and $\phi(0)=\phi(1)=0$, then (\ref{sol}) is continuous on $[0,1]\times [0,\infty)$, is of class $C^{2,1}$ on $(0,1)\times(0,\infty)$ and is a classical solution of (\ref{edp_determinista}).
\end{theorem}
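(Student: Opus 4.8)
The plan is to treat the series (\ref{sol}) as a product of the space-dependent Fourier modes $A_n\sin(n\pi x)$ and the time-dependent damping factors $\e^{-n^2\pi^2\alpha^2 t}$, and to exploit the fact that these two factors play very different roles according to whether $t=0$ or $t>0$. First I would record a decay estimate for the Fourier coefficients. Integrating by parts in $A_n=2\int_0^1\phi(y)\sin(n\pi y)\,\dif y$ and using $\phi(0)=\phi(1)=0$ to kill the boundary terms, together with the fact that $\phi$ is continuous and piecewise $C^1$ (so that $\phi'$ is piecewise continuous, hence bounded and square integrable), yields $A_n=\frac{2}{n\pi}\int_0^1\phi'(y)\cos(n\pi y)\,\dif y$, whence $|A_n|=O(1/n)$; in any case the $A_n$ are bounded.

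For the interior regularity on $(0,1)\times(0,\infty)$ I would fix $t_0>0$ and apply the Weierstrass $M$-test on $[0,1]\times[t_0,\infty)$. The key point is that differentiating the series term by term in $x$ or $t$ multiplies the $n$-th term by a polynomial in $n$, and this polynomial growth is completely absorbed by the factor $\e^{-n^2\pi^2\alpha^2 t_0}$; since $\sum_n n^k\,\e^{-n^2\pi^2\alpha^2 t_0}<\infty$ for every $k$, each of the formally differentiated series $u_t$, $u_x$, $u_{xx}$ converges uniformly on $[0,1]\times[t_0,\infty)$. The classical theorem on term-by-term differentiation of uniformly convergent series then legitimises the formal computation and shows that $u$ is of class $C^{2,1}$ there; because each summand individually satisfies the heat equation $\partial_t v=\alpha^2\partial_{xx}v$ (indeed $-n^2\pi^2\alpha^2=\alpha^2(-n^2\pi^2)$), the identity $u_t=\alpha^2 u_{xx}$ follows. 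As $t_0>0$ is arbitrary and $C^{2,1}$ is a local property, this covers the whole open strip. The homogeneous boundary conditions are immediate since $\sin(n\pi\cdot0)=\sin(n\pi\cdot1)=0$ for every $n$.

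The delicate part is continuity up to the line $t=0$, where the exponential damping no longer helps and the $M$-test fails. Here I would not try to dominate the terms in absolute value, but instead invoke Abel's test for uniform convergence: the purely spatial series $\sum_n A_n\sin(n\pi x)$ is the Fourier sine series of $\phi$, and under the stated hypotheses (continuous, piecewise $C^1$, vanishing at the endpoints, so that the odd $2$-periodic extension of $\phi$ is continuous and piecewise $C^1$) this series converges uniformly on $[0,1]$ to $\phi$. Since for each fixed $(x,t)$ the sequence $\e^{-n^2\pi^2\alpha^2 t}$ is monotone decreasing in $n$ and uniformly bounded by $1$, Abel's test upgrades this to uniform convergence of the full series (\ref{sol}) on all of $[0,1]\times[0,\infty)$. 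Uniform convergence of continuous summands yields continuity of $u$ on the closed strip, and evaluating at $t=0$ recovers $u(x,0)=\phi(x)$, so the initial condition is attained.

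I expect the main obstacle to be precisely this boundary case $t=0$: establishing uniform convergence of the Fourier sine series of $\phi$ and combining it with the time factors. The interior statement is comparatively routine because the super-exponential decay of $\e^{-n^2\pi^2\alpha^2 t}$ dominates all polynomial factors, but near $t=0$ one must genuinely use the regularity of $\phi$ through the coefficient decay $|A_n|=O(1/n)$ (or, equivalently, the classical uniform-convergence theorem) rather than brute-force majorisation. A minor technical care is needed at the finitely many points where $\phi'$ may jump, but since $\phi$ itself is continuous these do not obstruct the integration by parts or the convergence argument.
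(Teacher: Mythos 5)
Your proposal is correct, and the interior part (fix $t_0>0$, Weierstrass $M$-test on the differentiated series, term-by-term differentiation, each mode solving the heat equation) coincides with the paper's argument. Where you genuinely diverge is the delicate case $t=0$. The paper handles it by proving the stronger fact $\sum_{n=1}^\infty |A_n|<\infty$: it extends $\phi$ oddly to $[-1,1]$, differentiates the Fourier series term by term so that $\sum n\pi A_n\cos(n\pi x)$ is the Fourier series of $\phi'$, applies Parseval to get $\sum n^2A_n^2<\infty$, and then Cauchy--Schwarz; after that the $M$-test with majorant $|A_n|$ gives absolute and uniform convergence on all of $[0,1]\times[0,\infty)$ in one stroke. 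You instead stop at the weaker estimate $|A_n|=O(1/n)$ (which, as you implicitly recognize, is not enough for the $M$-test at $t=0$ since $\sum 1/n$ diverges), cite the classical theorem that the Fourier sine series of a continuous, piecewise $C^1$ function vanishing at the endpoints converges uniformly, and then upgrade to the full series by Abel's test for uniform convergence, using that $\e^{-n^2\pi^2\alpha^2 t}$ is monotone in $n$ and bounded by $1$. This is a valid mechanism and is in fact more flexible: it would still work if the sine series converged uniformly but not absolutely, whereas the paper's majorization needs $\ell^1$ coefficients. The trade-off is self-containedness: the classical theorem you invoke is usually proved exactly by the paper's Parseval--Cauchy--Schwarz argument, so the paper's route is the more elementary closed circuit, while yours outsources that step and adds the summation-by-parts machinery of Abel's test. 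One small imprecision to fix: your parenthetical suggesting that the coefficient decay $|A_n|=O(1/n)$ is ``equivalent'' to the uniform-convergence theorem is not right ($O(1/n)$ decay alone implies neither uniform nor absolute convergence); your proof does not actually rely on that equivalence, only on the classical theorem itself, so the argument stands.
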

\begin{proof}
We present a sketch of the proof. Let us see that $\sum_{n=1}^{\infty} |A_n|<\infty$. We work with Fourier series on $[-1,1]$. Since $\phi(0)=\phi(1)=0$, we can extend $\phi$ in an odd way to $[-1,1]$ so that the resulting function is continuous and piecewise $C^1$ on $[-1,1]$. This allows us to differentiate the Fourier series of $\phi(x)$, $\sum_{n=1}^\infty A_n \sin(n\pi x)$, term by term. Thus, the Fourier series of $\phi'(x)$ is $\sum_{n=1}^\infty n\pi A_n\cos(n\pi x)$. Since $\phi'$ is defined and continuous on $[0,1]$ except at a finite number of points, then it is square integrable, therefore $\sum_{n=1}^\infty n^2 A_n^2<\infty$ by Parseval's identity. By Cauchy-Schwarz inequality, 
\[ \sum_{n=1}^\infty |A_n|\leq \left(\sum_{n=1}^\infty n^2 A_n^2\right)^{\frac12}\left(\sum_{n=1}^\infty \frac{1}{n^2}\right)^{\frac12}<\infty.\]
As $|A_n \e^{-n^2 \pi^2 \alpha^2 t}\sin(n\pi x)|\leq |A_n|$, the series (\ref{sol}) converges absolutely and uniformly on $[0,1]\times [0,\infty)$. To check that $u_t=u_{xx}$, we have to compute the derivatives of (\ref{sol}). For example, to compute $u_t$, we notice that for $t\geq t_0>0$
\[ \left|\frac{\partial}{\partial t}\left(A_n \e^{-n^2 \pi^2 \alpha^2 t}\sin(n\pi x)\right)\right|\leq |A_n| n^2 \pi^2 \alpha^2 \e^{-n^2 \pi^2 \alpha^2 t_0},\]
with $\sum_{n=1}^\infty |A_n| n^2 \pi^2 \alpha^2 \e^{-n^2 \pi^2 \alpha^2 t_0}<\infty$. This implies that 
\[u_t(x,t)=\sum_{n=1}^\infty -A_n n^2 \pi^2 \alpha^2 \e^{-n^2 \pi^2 \alpha^2 t}\sin(n\pi x),\]
for $x\in(0,1)$ and $t>0$. The computation of $u_{xx}$ proceeds similarly.
\end{proof}

\bigskip

Now we consider (\ref{edp_determinista}) in a random setting, meaning that we are going to work on an underlying complete probability space $(\Omega,\mathcal{F},\mathbb{P})$, where $\Omega$ is the set of outcomes, that will be generically denoted by $\omega$, $\mathcal{F}$ is a $\sigma$-algebra of events and $\mathbb{P}$ is a probability measure. We consider the diffusion coefficient $\alpha^2(\omega)$ as a positive random variable and the initial condition 
\[ \phi=\{\phi(x)(\omega):0\leq x\leq 1,\,\omega\in\Omega\} \]
as a stochastic process in our probability space. In this way, the solution given in (\ref{sol}) is a stochastic process expressed as a random series,
\begin{equation}
u(x,t)(\omega)=\sum_{n=1}^\infty A_n(\omega)\,\e^{-n^2\pi^2\alpha^2(\omega) t}\sin(n\pi x),
\label{sol2}
\end{equation}
where the random Fourier coefficient
\[ A_n(\omega)=2\int_0^1 \phi(y)(\omega) \sin(n\pi y)\,\dif y \]
is understood as a Lebesgue integral. 

\begin{notation}
Throughout this paper we will work with Lebesgue spaces. Remember that, if $(S,\mathcal{A},\mu)$ is a measure space, we denote by $\leb^p(S)$ ($1\leq p<\infty$) the set of measurable functions $f:S\rightarrow\mathbb{R}$ such that $\|f\|_{\leb^p(S)}=(\int_S |f|^p\,\dif \mu)^{1/p}<\infty$. We denote by $\leb^\infty(S)$ the set of measurable functions such that $\|f\|_{\leb^\infty(S)}=\inf\{\sup\{|f(x)|:\,x\in S\backslash N\}:\,\mu(N)=0\}<\infty$. We write a.e. as a brief notation for ``almost every'', which means that some property holds except for a set of measure zero.

Here, we will deal with $S=\mathcal{T}\subseteq\mathbb{R}$ and $\dif\mu=\dif x$ the Lebesgue measure, with $S=\Omega$ and $\mu=\mathbb{P}$ the probability measure, and with $S=\mathcal{T}\times\Omega$ and $\dif\mu=\dif x\times \dif\mathbb{P}$. Notice that $f\in \leb^p(\mathcal{T}\times\Omega)$ if and only if $\|f\|_{\leb^p(\mathcal{T}\times \Omega)}=(\mathbb{E}[\int_{\mathcal{T}} |f(x)|^p\,\dif x])^{1/p}<\infty$. In the particular case of $S=\Omega$ and $\mu=\mathbb{P}$, the brief notation a.s. stands for ``almost surely''.

In this paper, an inequality related to Lebesgue spaces will be frequently used. This inequality is well-known as the generalized H\"{o}lder's inequality, which says that, for any measurable functions $f_1,\ldots,f_m$,
\[ \|f_1\cdots f_m\|_{\leb^1(S)}\leq \|f_1\|_{\leb^{r_1}(S)}\cdots \|f_m\|_{\leb^{r_m}(S)}, \]
where
\[ \frac{1}{r_1}+\cdots+\frac{1}{r_m}=1,\quad 1\leq r_1,\ldots,r_m\leq\infty. \]
When $m=2$, this inequality is simply known as H\"{o}lder's inequality. When $m=2$, $r_1=2$ and $r_2=2$, the inequality receives the name of Cauchy-Schwarz inequality.
\end{notation}

\bigskip 

Notice that if $\phi(\cdot)(\omega)\in \leb^1(0,1)$, then 
\[ |A_n(\omega)\,\e^{-n^2\pi^2\alpha^2(\omega) t}\sin(n\pi x)|\leq 2\|\phi(\cdot)(\omega)\|_{\leb^1(0,1)} \e^{-n^2\pi^2\alpha^2(\omega) t}, \] 
so by the comparison test the random series given in (\ref{sol2}) is a.s. convergent and $u(x,t)(\omega)$ is well-defined, for $0<x<1$ and $t>0$. \ \\

The stochastic process (\ref{sol2}) is a rigorous solution to the randomized problem (\ref{edp_determinista}) in the a.s. and $\leb^2$ setting, more specifically:
\begin{theorem}
The following statements hold:\\
\vspace{-0.5cm}
\begin{itemize}
\item[i)] a.s. solution: Suppose that $\phi\in \leb^2([0,1]\times\Omega)$. Then the random series that defines (\ref{sol2}) converges a.s. for all $x\in [0,1]$ and $t>0$. Moreover,
\[ u_t(x,t)(\omega)=\alpha^2(\omega)\,u_{xx}(x,t)(\omega) \]
for $x\in (0,1)$, $t>0$ and a.e. $\omega$, where the derivatives are understood in the classical sense; $u(0,t)(\omega)=u(1,t)(\omega)=0$ for $t\geq0$ and a.e. $\omega$; and $u(x,0)(\omega)=\phi(x)(\omega)$ for a.e. $x\in [0,1]$ and a.e. $\omega$.

\item[ii)] $\leb^2$ solution: Suppose that $\phi\in \leb^2([0,1]\times\Omega)$ and $0<a\leq\alpha^2(\omega)\leq b$ a.e. $\omega\in\Omega$ for certain $a,b\in\mathbb{R}$. Then the random series that defines (\ref{sol2}) converges in $\leb^2(\Omega)$ for all $x\in [0,1]$ and $t>0$. Moreover,
\[ u_t(x,t)(\omega)=\alpha^2(\omega)\,u_{xx}(x,t)(\omega) \]
for $x\in (0,1)$, $t>0$ and a.e. $\omega$, where the derivatives are understood in the mean square sense (Definition 5.33 in \cite{llibre_powell}); $u(0,t)(\omega)=u(1,t)(\omega)=0$ for $t\geq0$ and a.e. $\omega$; and $u(x,0)(\omega)=\phi(x)(\omega)$ for a.e. $x\in [0,1]$ and a.e. $\omega$.
\end{itemize}
\end{theorem}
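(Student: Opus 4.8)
The plan is to handle both parts on the common engine that, for $t>0$, the factor $\e^{-n^2\pi^2\alpha^2 t}$ decays faster than any polynomial in $n$, so it dominates every polynomial factor produced by differentiating term by term. The one estimate I would isolate first is a uniform bound on the random Fourier coefficients: by the Cauchy--Schwarz inequality on $(0,1)$,
\[ |A_n(\omega)|\leq 2\left(\int_0^1 |\phi(y)(\omega)|^2\,\dif y\right)^{1/2}\left(\int_0^1\sin^2(n\pi y)\,\dif y\right)^{1/2}=\sqrt{2}\,\|\phi(\cdot)(\omega)\|_{\leb^2(0,1)}, \]
which is finite for a.e.\ $\omega$ because Tonelli's theorem applied to $\phi\in\leb^2([0,1]\times\Omega)$ yields $\phi(\cdot)(\omega)\in\leb^2(0,1)$ a.s. Since $\alpha^2$ is a positive random variable, the set of $\omega$ with both $\phi(\cdot)(\omega)\in\leb^2(0,1)$ and $\alpha^2(\omega)>0$ has full measure, and the measurability of $A_n(\cdot)$, and hence of $u$ as a limit of measurable functions, is inherited from the joint measurability of $\phi$.

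For part i), I would fix such a good $\omega$ and argue exactly as in the deterministic proof of Theorem~1.1, but with the bound above replacing the hypothesis $\sum_n|A_n|<\infty$. For $t\geq t_0>0$ one has $\e^{-n^2\pi^2\alpha^2(\omega)t}\leq \e^{-n^2\pi^2\alpha^2(\omega)t_0}$, so both (\ref{sol2}) and each series obtained by differentiating term by term in $t$ or in $x$ converge absolutely and uniformly on $[0,1]\times[t_0,\infty)$, because differentiation only inserts a polynomial factor $n^k$ and $\sum_n n^k\e^{-cn^2}<\infty$ for every $c>0$. This gives that $u(\cdot,\cdot)(\omega)$ is $C^{2,1}$ on $(0,1)\times(0,\infty)$ and solves $u_t=\alpha^2(\omega)u_{xx}$ classically there. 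The boundary conditions are immediate from $\sin(0)=\sin(n\pi)=0$, and the initial condition holds because at $t=0$ the series is the Fourier sine series of $\phi(\cdot)(\omega)\in\leb^2(0,1)$, which converges to it in $\leb^2(0,1)$ and hence agrees with $\phi(\cdot)(\omega)$ for a.e.\ $x$.

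For part ii), with the extra hypothesis $0<a\leq\alpha^2(\omega)\leq b$ a.e., I would promote these pointwise-in-$\omega$ bounds to $\leb^2(\Omega)$ bounds. Taking $\leb^2(\Omega)$-norms in the coefficient estimate gives $\|A_n\|_{\leb^2(\Omega)}\leq\sqrt2\,\|\phi\|_{\leb^2([0,1]\times\Omega)}=:C$ uniformly in $n$. Using the lower bound through $\e^{-n^2\pi^2\alpha^2 t}\leq \e^{-n^2\pi^2 a t}$ together with Minkowski's inequality, the tails obey $\left\|\sum_{n=M+1}^N A_n\,\e^{-n^2\pi^2\alpha^2 t}\sin(n\pi x)\right\|_{\leb^2(\Omega)}\leq C\sum_{n=M+1}^N \e^{-n^2\pi^2 a t}$, which tends to $0$ as $M,N\to\infty$; thus (\ref{sol2}) converges in $\leb^2(\Omega)$, and its limit coincides a.s.\ with the process $u$ of part i) since $\leb^2(\Omega)$-convergence forces a.s.\ convergence along a subsequence. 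For the mean-square equation, the term-by-term $t$- and $x$-derivative series are controlled identically: the upper bound enters through $\alpha^2\e^{-n^2\pi^2\alpha^2 t}\leq b\,\e^{-n^2\pi^2 a t}$, so $\|A_n n^2\pi^2\alpha^2\e^{-n^2\pi^2\alpha^2 t}\|_{\leb^2(\Omega)}\leq C b\,n^2\pi^2\e^{-n^2\pi^2 a t}$ is summable, uniformly for $t\geq t_0$. Invoking the mean-square differentiation theorem (Definition~5.33 in \cite{llibre_powell}) identifies these sums with $u_t$ and $u_{xx}$ in the mean square sense; and since $\alpha^2$ is essentially bounded, multiplication by $\alpha^2$ is a continuous operator on $\leb^2(\Omega)$ that commutes with the $\leb^2(\Omega)$-limit, giving $\alpha^2 u_{xx}=\sum_n -A_n n^2\pi^2\alpha^2\,\e^{-n^2\pi^2\alpha^2 t}\sin(n\pi x)=u_t$.

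The step I expect to require the most care is this last interchange in part ii): passing the factor $\alpha^2(\omega)$ inside the $\leb^2(\Omega)$-limit so that $\alpha^2 u_{xx}$ matches $u_t$. This is exactly where both bounds on $\alpha^2$ are indispensable --- the lower bound $a$ to furnish the decaying exponential and the upper bound $b$ both to dominate the polynomial factors and to guarantee continuity of multiplication by $\alpha^2$ on $\leb^2(\Omega)$ --- and it is the one place where one must check that mean-square differentiation is genuinely justified by the uniform $\leb^2(\Omega)$-convergence of the differentiated series on compact subintervals of $(0,\infty)$, rather than being a purely formal term-by-term manipulation.
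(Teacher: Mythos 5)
Your overall strategy coincides with the paper's: part i) is handled pathwise, fixing a good $\omega$ and rerunning the deterministic argument with the factor $\e^{-n^2\pi^2\alpha^2(\omega)t}$, $t\geq t_0>0$, absorbing the polynomial factors created by term-by-term differentiation; part ii) promotes the coefficient bound to $\|A_n\|_{\leb^2(\Omega)}\leq C$ and uses $0<a\leq\alpha^2\leq b$ to get uniform $\leb^2(\Omega)$-convergence of the differentiated series before invoking mean-square differentiation. Your constant $\sqrt{2}$ is even slightly sharper than the paper's $2$, and your closing observation that multiplication by the essentially bounded $\alpha^2$ is continuous on $\leb^2(\Omega)$ is a correct way to finish part ii).

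There is, however, one genuine gap: the initial condition. You write that at $t=0$ the series is the Fourier sine series of $\phi(\cdot)(\omega)\in\leb^2(0,1)$, which ``converges to it in $\leb^2(0,1)$ and hence agrees with $\phi(\cdot)(\omega)$ for a.e.\ $x$.'' This is a non sequitur: $\leb^2(0,1)$-convergence of the partial sums does not imply that the series converges pointwise at a.e.\ $x$; it only gives a.e.\ convergence of a subsequence of partial sums. The claim to be proved is pointwise --- $u(x,0)(\omega)=\phi(x)(\omega)$ for a.e.\ $x$ and a.e.\ $\omega$ --- and $u(x,0)(\omega)$ is by definition the pointwise sum of the series at $t=0$, where your convergence machinery gives nothing: the exponential factor equals $1$ there and the coefficients $A_n(\omega)$ are merely bounded, not summable. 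The paper closes exactly this hole by invoking Carleson's theorem, which guarantees that the Fourier series of an $\leb^2(0,1)$ function converges pointwise at a.e.\ $x$; combined with the $\leb^2$ identification of the limit, this yields the stated a.e.\ equality. So your argument is repaired by a one-line citation, but as written the step fails, and no elementary argument from $\leb^2$-convergence alone (without Carleson--Hunt, or extra smoothness hypotheses on $\phi$ as in the deterministic Theorem 1.1) will produce the pointwise claim.
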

\begin{proof}
We present a sketch of the proof in the $\leb^2$ setting (in the classical setting is analogous, but acting pointwise on $\omega$). We have, by Cauchy-Schwarz inequality,
\begin{align*}
\|A_n\|_{\leb^2(\Omega)}\leq {} & \left(4\int_0^1\int_0^1 \mathbb{E}[|\phi(y)||\phi(z)|]\,\dif y\,\dif z\right)^{\frac12} \\
\leq {} & \left(4\int_0^1\int_0^1 \mathbb{E}[\phi(y)^2]^{\frac12}\mathbb{E}[\phi(z)^2]^{\frac12}\,\dif y\,\dif z\right)^{\frac12} \\
\leq {} & 2\left(\left(\int_0^1 \mathbb{E}[\phi(y)^2]^{\frac12}\,\dif y\right)\left(\int_0^1 \mathbb{E}[\phi(z)^2]^{\frac12}\,\dif z\right)\right)^{\frac12} \\
= {} & 2\,\int_0^1\mathbb{E}[\phi(y)^2]^{\frac12}\,\dif y\leq 2\, \|\phi\|_{\leb^2([0,1]\times\Omega)}=:C.
\end{align*}
Then
\[ \|A_n \e^{-n^2\pi^2\alpha^2 t}\sin(n\pi x)\|_{\leb^2(\Omega)}\leq \|A_n\|_{\leb^2(\Omega)}\e^{-n^2\pi^2 a t}\leq C\,\e^{-n^2\pi^2 a t}, \]
so by the comparison test $\sum_{n=1}^\infty\|A_n \e^{-n^2\pi^2\alpha^2 t}\sin(n\pi x)\|_{\leb^2(\Omega)}<\infty$ for $x\in [0,1]$ and $t>0$. For the initial condition, since $\phi(x)(\omega)=\sum_{n=1}^\infty A_n(\omega)\sin(n\pi x)$ in $\leb^2(0,1)$ (and also pointwise at a.e. $x\in [0,1]$ by Carleson's Theorem) for a.e. $\omega$, we have $u(x,0)(\omega)=\phi(x)(\omega)$ for a.e. $x\in [0,1]$ and a.e. $\omega$. Finally, to check that $u_t(x,t)(\omega)=\alpha^2(\omega)\,u_{xx}(x,t)(\omega)$, we have to check the mean square uniform convergence of the series of the mean square derivatives (see Theorem 3.1 in \cite{JC}). For example, for $t\geq t_0>0$ and $x\in [0,1]$,
\[
\left\| \frac{\partial }{\partial t}\left(A_n \e^{-n^2\pi^2\alpha^2 t}\sin(n\pi x)\right)\right\|_{\leb^2(\Omega)}\leq   C\,n^2\pi^2\,b\,\e^{-n^2\pi^2a\, t_0}, \]
with $\sum_{n=1}^\infty n^2\pi^2\,b\,\e^{-n^2\pi^2a\, t_0}<\infty$, so we obtain 
\[ u_t(x,t)(\omega)=\sum_{n=1}^\infty -n^2\pi^2 \alpha^2(\omega) A_n(\omega) \e^{-n^2\pi^2\alpha^2(\omega) t}\sin(n\pi x) \]
with uniform convergence in the sense of $\leb^2(\Omega)$, for $t\geq t_0>0$ and $x\in [0,1]$. Since $t_0>0$ is arbitrary, this holds for $t>0$. To compute $u_{xx}$ we proceed similarly. 
\end{proof}

\bigskip

The main goal of this paper is, under suitable hypotheses, to compute approximations of the probability density function of the solution $u(x,t)(\omega)$ given in (\ref{sol2}), for $0<x<1$ and $t>0$. 

In what follows, we will try to solve the problem of finding out the probability density function via two different approaches: \textit{grosso modo}, first, dealing with the joint density of the vector of random Fourier coefficients $(A_1,\ldots,A_N)$, and second, taking advantage of the Karhunen-Lo\`{e}ve expansion of the stochastic process $\phi$ defining the initial condition.

\section{Computing the probability density function under hypotheses on the random vector $(A_1,\ldots,A_N)$}

First, we will present auxiliary results that will be needed afterwards. 

\begin{lemma}[Random Variable Transformation technique] \label{lema_abscont}
Let $X$ be an absolutely continuous random vector with density $f_X$ and with support $D_X$ contained in an open set $D\subseteq \mathbb{R}^n$. Let $g:D\rightarrow\mathbb{R}^n$ be a $C^1(D)$ function, injective on $D$ such that $Jg(x)\neq0$ for all $x\in D$ ($J$ stands for Jacobian). Let $h=g^{-1}:g(D)\rightarrow\mathbb{R}^n$. Let $Y=g(X)$ be a random vector. Then $Y$ is absolutely continuous with density
\begin{equation}
 f_Y(y)=\begin{cases} f_X(h(y))|Jh(y)|, & \;y\in g(D), \\ 0, & \; y\notin g(D). \end{cases}  
\label{dens_lema}
\end{equation}
\end{lemma}

The proof appears in Lemma 4.12 of \cite{llibre_powell}.

\begin{lemma} \label{lema_norm_cond}
Let $Y$ be a multivariate Gaussian random vector with vector mean $\mu$ and covariance matrix $\Sigma$. Partition 
\[ Y=\begin{pmatrix} Y_1 \\ Y_2 \end{pmatrix}, \quad \mu=\begin{pmatrix} \mu_1 \\ \mu_2 \end{pmatrix}, \quad \Sigma=\begin{pmatrix} \Sigma_{11} & \Sigma_{12} \\ \Sigma_{21} & \Sigma_{22} \end{pmatrix}. \]
Then $Y_1|Y_2=a$ is a multivariate Gaussian random vector with vector mean $\bar{\mu}$ and covariance matrix $\bar{\Sigma}$, where 
\[ \bar{\mu}=\mu_1+\Sigma_{12}\Sigma_{22}^{-1}(a-\mu_2),\quad \bar{\Sigma}=\Sigma_{11}-\Sigma_{12}\Sigma_{22}^{-1}\Sigma_{21}. \]
\end{lemma}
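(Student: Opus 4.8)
The plan is to avoid any explicit density computation and instead exploit the defining property of jointly Gaussian vectors that uncorrelated components are independent. First I would observe that the statement only makes sense when $\Sigma_{22}$ is invertible, which I assume throughout (this is implicit in the appearance of $\Sigma_{22}^{-1}$). The core idea is to \emph{decorrelate} $Y_1$ from $Y_2$ by subtracting off the linear term in $Y_2$ that carries all the correlation.

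Concretely, I would introduce the auxiliary vector
\[ Z = Y_1 - \Sigma_{12}\Sigma_{22}^{-1} Y_2. \]
Since $(Z,Y_2)$ is obtained from $Y$ by a deterministic linear map, it is again jointly Gaussian. A direct covariance computation then yields $\mathbb{E}[Z] = \mu_1 - \Sigma_{12}\Sigma_{22}^{-1}\mu_2$ and $\Cov(Z) = \Sigma_{11} - \Sigma_{12}\Sigma_{22}^{-1}\Sigma_{21} = \bar{\Sigma}$, and crucially
\[ \Cov(Z,Y_2) = \Sigma_{12} - \Sigma_{12}\Sigma_{22}^{-1}\Sigma_{22} = 0. \]
Because $Z$ and $Y_2$ are jointly Gaussian and uncorrelated, they are independent.

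With this in hand the conditioning is immediate. Writing $Y_1 = Z + \Sigma_{12}\Sigma_{22}^{-1} Y_2$ and invoking independence, conditioning on $Y_2 = a$ leaves the law of $Z$ unchanged while freezing the second summand at the constant $\Sigma_{12}\Sigma_{22}^{-1}a$. Hence $Y_1 \mid Y_2 = a$ has the distribution of $Z + \Sigma_{12}\Sigma_{22}^{-1}a$, which is a deterministic shift of a Gaussian and therefore Gaussian, with mean $\mathbb{E}[Z] + \Sigma_{12}\Sigma_{22}^{-1}a = \mu_1 + \Sigma_{12}\Sigma_{22}^{-1}(a-\mu_2) = \bar{\mu}$ and covariance $\Cov(Z) = \bar{\Sigma}$, exactly as claimed.

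The one step that genuinely uses Gaussianity, rather than mere linear-algebra bookkeeping of block covariances, is the passage from \emph{uncorrelated} to \emph{independent}; this is where I expect the only real subtlety to lie. An alternative, more computational route would start from the joint density, form the ratio $f_Y(y_1,a)/f_{Y_2}(a)$, and complete the square in $y_1$ after inverting $\Sigma$ through the Schur complement $\bar{\Sigma}$. That approach reaches the same conclusion but forces one to manipulate the block inverse of $\Sigma$ explicitly, precisely the computation that the decorrelation argument sidesteps.
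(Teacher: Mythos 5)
Your proof is correct, but there is no in-paper argument to compare it against: the paper disposes of this lemma with a bare citation to Example 4.51 of \cite{llibre_powell}. Your decorrelation argument is the standard clean route and is complete as sketched: $(Z,Y_2)$ with $Z=Y_1-\Sigma_{12}\Sigma_{22}^{-1}Y_2$ is jointly Gaussian as a linear image of $Y$; the cross-covariance $\Cov(Z,Y_2)=\Sigma_{12}-\Sigma_{12}\Sigma_{22}^{-1}\Sigma_{22}=0$; joint Gaussianity upgrades uncorrelatedness to independence; and the conditional law of $Y_1=Z+\Sigma_{12}\Sigma_{22}^{-1}Y_2$ given $Y_2=a$ is then the law of $Z+\Sigma_{12}\Sigma_{22}^{-1}a$. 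The one step you might make explicit, since conditioning is on a measure-zero event, is that independence gives this rigorously: for bounded measurable $h$, $\mathbb{E}[h(Y_1)\mid Y_2]=\varphi(Y_2)$ with $\varphi(a)=\mathbb{E}\left[h\left(Z+\Sigma_{12}\Sigma_{22}^{-1}a\right)\right]$, which is exactly the statement that the regular conditional distribution at $Y_2=a$ is $\mathrm{Normal}(\bar{\mu},\bar{\Sigma})$. Two remarks on what your route buys relative to the density computation you mention as an alternative: first, it is slightly more general, since it never requires $\Sigma$ or the Schur complement $\bar{\Sigma}=\Sigma_{11}-\Sigma_{12}\Sigma_{22}^{-1}\Sigma_{21}$ to be nonsingular (the conditional law is correctly identified even when it is a degenerate Gaussian), whereas forming the ratio $f_Y(y_1,a)/f_{Y_2}(a)$ does; second, the density route is the one that connects directly to the block-inverse identity the paper invokes later, in the proof of Theorem \ref{teor1}, where $1/\bar{\Sigma}_N$ is identified with $(\Sigma_N^{-1})_{11}$, so both perspectives end up being used downstream in the paper.
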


The proof appears in Example 4.51 of \cite{llibre_powell}.

\begin{lemma} \label{lema_norm}
Let $\phi=\{\phi(x)(\omega): 0\leq x\leq1,\,\omega\in\Omega\}$ be a Gaussian process in $\leb^2([0,1]\times \Omega)$. Let 
\[A_n(\omega)=2\int_0^1\phi(y)(\omega)\sin(n\pi y)\,\dif y, \]
where the integral is understood in the Lebesgue sense. Then $(A_1,\ldots,A_N)$ is a multivariate Gaussian random vector, for all $N\geq 1$. Moreover,
\[ \mathbb{E}[A_n]=2\int_0^1 \mathbb{E}[\phi(y)]\sin(n\pi y)\,\dif y, \]
\[ \Cov[A_n,A_m]=4\int_0^1\int_0^1 \Cov[\phi(y),\phi(z)]\sin(n\pi y)\sin(m\pi z)\,\dif y\,\dif z. \]
\end{lemma}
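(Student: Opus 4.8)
The plan is to treat the two moment formulas and the Gaussianity claim separately, since the formulas are routine Fubini computations while the joint Gaussianity carries the real content. For the mean, I would write $\mathbb{E}[A_n]=2\,\mathbb{E}[\int_0^1\phi(y)\sin(n\pi y)\,\dif y]$ and exchange $\mathbb{E}$ with $\int_0^1$; this is legitimate because $\int_0^1\mathbb{E}[|\phi(y)|]\,\dif y\leq(\int_0^1\mathbb{E}[\phi(y)^2]\,\dif y)^{1/2}=\|\phi\|_{\leb^2([0,1]\times\Omega)}<\infty$ by Cauchy--Schwarz. For the covariance I would pass to the centered process $\tilde\phi=\phi-\mathbb{E}[\phi]$, write $A_n-\mathbb{E}[A_n]=2\int_0^1\tilde\phi(y)\sin(n\pi y)\,\dif y$, expand the product $(A_n-\mathbb{E}[A_n])(A_m-\mathbb{E}[A_m])$ as a double integral over $[0,1]^2$, and apply Fubini on $[0,1]^2\times\Omega$. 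The integrability needed, namely $\int_0^1\int_0^1\mathbb{E}[|\tilde\phi(y)\tilde\phi(z)|]\,\dif y\,\dif z\leq(\int_0^1\mathbb{E}[\tilde\phi(y)^2]^{1/2}\,\dif y)^2\leq\|\tilde\phi\|_{\leb^2([0,1]\times\Omega)}^2<\infty$, is exactly the Cauchy--Schwarz estimate already used in the proof of the $\leb^2$ solution above.

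For the joint Gaussianity I would reduce to a one-dimensional statement using the standard criterion that $(A_1,\dots,A_N)$ is multivariate Gaussian if and only if every linear combination $\sum_{n=1}^N c_n A_n$ is a (possibly degenerate) univariate Gaussian. Since $\sum_{n=1}^N c_n A_n=\int_0^1\phi(y)\,g(y)\,\dif y$ with $g(y)=2\sum_{n=1}^N c_n\sin(n\pi y)$ a bounded trigonometric polynomial, everything comes down to proving that $\int_0^1\phi(y)\,g(y)\,\dif y$ is Gaussian for a fixed $g\in\leb^\infty(0,1)$. After centering (adding the deterministic constant $\mathbb{E}[\int_0^1\phi g\,\dif y]$ back at the end does not affect Gaussianity) I may assume $\phi$ is centered.

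The key step is to realize this integral as an element of the Gaussian subspace generated by the process. Let $\mathcal{H}\subseteq\leb^2(\Omega)$ be the closure of $\mathrm{span}\{\phi(y):y\in[0,1]\}$. Every finite linear combination $\sum_j d_j\phi(y_j)$ is Gaussian because $\phi$ is a Gaussian process (its finite-dimensional distributions are jointly normal), and the class of Gaussian random variables is closed in $\leb^2(\Omega)$: if Gaussians converge in $\leb^2(\Omega)$, their means and variances converge and hence so do their characteristic functions, forcing the limit to be Gaussian. Thus every element of $\mathcal{H}$ is Gaussian, and it suffices to show $I:=\int_0^1\phi(y)g(y)\,\dif y\in\mathcal{H}$. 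I would prove this by testing against $\mathcal{H}^\perp$: for any $W\in\leb^2(\Omega)$ with $\mathbb{E}[W\phi(y)]=0$ for all $y$, Fubini gives $\mathbb{E}[WI]=\int_0^1\mathbb{E}[W\phi(y)]\,g(y)\,\dif y=0$, the swap being justified by $\int_0^1\mathbb{E}[|W\phi(y)|]\,|g(y)|\,\dif y\leq\|g\|_{\leb^\infty(0,1)}\|W\|_{\leb^2(\Omega)}\|\phi\|_{\leb^2([0,1]\times\Omega)}<\infty$. Hence $I\perp\mathcal{H}^\perp$, so $I\in\mathcal{H}$ and $I$ is Gaussian.

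The main obstacle I anticipate is precisely this last step. The tempting route---approximating $I$ by Riemann sums $\sum_j\phi(y_j)g(y_j)\Delta y_j$, which are manifestly Gaussian, and passing to the limit---requires mean-square continuity of $\phi$ to guarantee $\leb^2(\Omega)$ convergence, and this is not available under the sole hypothesis $\phi\in\leb^2([0,1]\times\Omega)$ (the integral is only a pathwise Lebesgue integral). The orthogonal-projection argument above sidesteps this difficulty, trading the delicate convergence question for a single application of Fubini together with the closedness of the Gaussian class in $\leb^2(\Omega)$. Once Gaussianity of all linear combinations is established, the multivariate normality of $(A_1,\dots,A_N)$ follows from the criterion, and the mean and covariance formulas then identify its parameters.
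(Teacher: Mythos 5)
Your proposal is correct and takes essentially the same route as the paper's proof: the same criterion (multivariate Gaussianity via univariate Gaussianity of all linear combinations), the same key fact that the $\leb^2(\Omega)$-closure of the span of the process values consists of Gaussian variables (limits of Gaussians in $\leb^2(\Omega)$ are Gaussian, argued via characteristic functions), and the same trick of placing $\int_0^1\phi(y)g(y)\,\dif y$ inside that closed subspace by testing against its orthogonal complement with Fubini, exactly as the paper does with $V^{\perp\perp}=V$. The only differences are cosmetic: you center the process first and you write out the routine Fubini computations for the mean and covariance formulas, which the paper leaves implicit.
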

\begin{proof}
First, notice that $A_n$ exists and is a random variable, because by Cauchy-Schwarz inequality
\[ \mathbb{E}\left[\int_0^1 |\phi(y)\sin(n\pi y)|\,\dif y\right]\leq \|\phi\|_{\leb^1([0,1]\times \Omega)}\leq\|\phi\|_{\leb^2([0,1]\times \Omega)}< \infty, \]
and Fubini's Theorem applies.

Now, we want to check that $\sum_{j=1}^N \lambda_j A_j$ is normal, for all $\lambda_1,\ldots,\lambda_N \in \mathbb{R}$ (recall that a random vector $(X_1,\ldots,X_m)$ is multivariate Gaussian if and only if every finite linear combination of its random components is Gaussian \cite{Wong}). We write explicitly this sum:
\[\sum_{j=1}^N \lambda_j A_j(\omega)=2\int_0^1 \phi(y)(\omega) \left(\sum_{j=1}^N \lambda_j\sin(j\pi y)\right)\,\dif y=\int_0^1 \phi(y)(\omega)h_N(y)\,\dif y=:Y_N(\omega), \]
where $h_N(y)=2\sum_{j=1}^N \lambda_j\sin(j\pi y)$. We denote by $C$ the bound $2\sum_{j=1}^N |\lambda_j|$ of $|h_N(y)|$.

We have that $Y_N\in \leb^2(\Omega)$, since by Cauchy-Schwarz inequality
\[\mathbb{E}[Y_N^2]=\mathbb{E}\left[\left(\int_0^1\phi(y)h_N(y)\,\dif y\right)^2\right]\leq C^2\|\phi\|_{\leb^2([0,1]\times \Omega)}^2<\infty.\]

Consider the closed vector subspace
\small
\[V=\overline{\left\{\sum_{k=1}^m\mu_k\phi(y_k)h_N(y_k):\, \mu_1,\ldots,\mu_m\in \mathbb{R},\,y_1,\ldots,y_m\in[0,1],\, m\in\mathbb{N}\right\}}^{\;\leb^2(\Omega)}\subseteq \leb^2(\Omega).\]
\normalsize
Since $\phi$ is a Gaussian process and the limit in $\leb^2(\Omega)$ of normal random variables is again a normal random variable\footnote{Let $\{X_n\}_{n=1}^\infty$ be a sequence of random variables with $X_n\sim\text{Normal}(\mu_n,\sigma_n^2)$, such that there exists its limit in $\leb^2(\Omega)$, $X=\lim_{n\rightarrow\infty} X_n$. To see that $X$ is normally distributed, let $\mu$ and $\sigma^2$ be the expectation and variance of $X$, respectively. We have that $\mu_n\rightarrow\mu$ and $\sigma_n^2\rightarrow\sigma^2$ as $n\rightarrow\infty$, by Cauchy-Schwarz inequality. The characteristic function of $X_n$, $\varphi_{X_n}(t)=\e^{\mathrm{i}\mu_n t-\sigma_n^2 t^2/2}$, tends to the function $\varphi(t)=\e^{\mathrm{i}\mu t-\sigma^2 t^2/2}$. Since $\{X_n\}_{n=1}^\infty$ tends in law to $X$, by L\'{e}vy's Theorem, $\varphi(t)$ is the characteristic function of $X$. Therefore $X\sim\text{Normal}(\mu,\sigma^2)$.}, we conclude that any random variable in $V$ has a normal law. Thus, it suffices to prove that $Y_N\in V$. For that purpose, we will use the theory of orthogonality in Hilbert spaces (remember that $\leb^2(\Omega)$ is a Hilbert space with the inner product of two random variables $X_1$ and $X_2$ defined by $\mathbb{E}[X_1X_2]$). Since $V^{\perp\perp}=\overline{V}=V$, it suffices to show that $Y_N\in V^{\perp\perp}$, that is: for all $X\in V^\perp\subseteq \leb^2(\Omega)$, $\mathbb{E}[X\,Y_N]=0$. 

Let $X\in V^\perp\subseteq \leb^2(\Omega)$. Then $\mathbb{E}[X\phi(y)h_N(y)]=0$ for all $y\in[0,1]$, since $\phi(y)h_N(y)\in V$. Thus,
\[ \mathbb{E}[X \,Y_N]=\mathbb{E}\left[X\int_0^1\phi(y)h_N(y)\,\dif y\right]=\int_0^1 \mathbb{E}[X\phi(y) h_N(y)]\,\dif y=0. \]
Notice that the interchange of $\mathbb{E}$ and $\int_0^1$ in this last expression is justified by Fubini's Theorem, since by Cauchy-Schwarz inequality
\small
\[ \mathbb{E}\left[\int_0^1 |\phi(y)||h_N(y)||X|\,\dif y\right]\leq C\,\mathbb{E}\left[|X|\int_0^1 |\phi(y)|\,\dif y\right]\leq C\,\|X\|_{\leb^2(\Omega)}\|\phi\|_{\leb^2([0,1]\times\Omega)}<\infty. \]
\normalsize

\end{proof}

\begin{notation} When the hypotheses of Lemma \ref{lema_norm} hold, we will denote the covariance matrix of $(A_1,\ldots,A_N)$ by $\Sigma_N$ and the mean vector by $\mu_N$. 
\end{notation}

\begin{notation} Given a random vector $X$, its distribution function will be denoted by $F_X$. If it is absolutely continuous, its probability density will be denoted by $f_X$.
\end{notation}

\begin{notation} \label{uN}
We denote
\[ u_N(x,t)(\omega)=\sum_{n=1}^N A_n(\omega)\,\e^{-n^2\pi^2\alpha^2(\omega) t}\sin(n\pi x), \]
which represents a truncation of (\ref{sol2}). 
\end{notation} 

Now we show the main two theorems of this section. The hypotheses are rather technical. For the sake of clarity, we will comment on them later.

\begin{theorem} \label{teor1}
Let $\{\phi(x):\,0\leq x\leq 1\}$ be a Gaussian process in $\leb^2(\Omega\times [0,1])$. Suppose that $\alpha^2$ and $(A_1,\ldots,A_N)$ are independent and absolutely continuous, for $N\geq1$. Assume that $(\Sigma_N^{-1})_{11}\leq C$ for all $N\geq1$ ($\Sigma_N$ is the covariance matrix of $(A_1,\ldots,A_N)$) and $\sum_{n=m}^\infty \|\e^{-(n^2-2)\pi^2\alpha^2 t}\|_{\leb^1(\Omega)}<\infty$ for certain $m\in\mathbb{N}$. Then the density of $u_N(x,t)(\omega)$,
\small
\begin{align}
 f_{u_N(x,t)}(u)= {} & \int_{\mathbb{R}^N} f_{(A_1,\ldots,A_N)} \left(\frac{\e^{\pi^2\alpha^2t}}{\sin(\pi x)}\left\{u-\sum_{n=2}^N a_n \e^{-n^2\pi^2\alpha^2 t}\sin(n \pi x)\right\}, a_2,\ldots,a_N\right) \nonumber \\
\cdot & f_{\alpha^2}(\alpha^2)\frac{\e^{\pi^2\alpha^2 t}}{\sin(\pi x)}\,\dif a_2\cdots \dif a_N\,\dif \alpha^2, \label{frrr}
\end{align}
\normalsize 
converges in $\leb^\infty(\mathbb{R})$ to a density of the random variable $u(x,t)(\omega)$ given in (\ref{sol2}), for $0<x<1$ and $t>0$.
\end{theorem}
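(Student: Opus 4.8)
The plan is to first produce the closed form (\ref{frrr}) for the density of the truncation $u_N(x,t)$ and then to prove that, as $N\to\infty$, these densities converge in $\leb^\infty(\mathbb{R})$ to a density of $u(x,t)$. For the closed form I would invoke the Random Variable Transformation technique (Lemma \ref{lema_abscont}). Fix $0<x<1$ and $t>0$. By the assumed independence the vector $(A_1,\ldots,A_N,\alpha^2)$ is absolutely continuous with density $f_{(A_1,\ldots,A_N)}\,f_{\alpha^2}$, and I would apply Lemma \ref{lema_abscont} to the $C^1$ map $(a_1,\ldots,a_N,s)\mapsto(u,a_2,\ldots,a_N,s)$ with $u=\sum_{n=1}^N a_n\e^{-n^2\pi^2 s t}\sin(n\pi x)$. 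Since $\sin(\pi x)\neq0$ this map is injective on the open set $\mathbb{R}^N\times(0,\infty)$ with nonvanishing Jacobian, its inverse recovering $a_1$ as the bracketed expression in (\ref{frrr}) and contributing the factor $\e^{\pi^2 s t}/\sin(\pi x)$. Marginalising out $a_2,\ldots,a_N$ and $s=\alpha^2$ then yields (\ref{frrr}), and in particular shows $u_N(x,t)$ is absolutely continuous.

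The structural fact driving the convergence is that, conditioning on $\alpha^2=c$, independence together with Lemma \ref{lema_norm} makes $u_N(x,t)$ a linear combination of the Gaussian vector $(A_1,\ldots,A_N)$, hence $u_N\mid\alpha^2=c\sim\mathrm{Normal}(\bar\mu_N(c),\sigma_N^2(c))$ with $\sigma_N^2(c)=\sum_{n,k=1}^N\Cov[A_n,A_k]\e^{-(n^2+k^2)\pi^2 c t}\sin(n\pi x)\sin(k\pi x)$; thus (\ref{frrr}) is the Gaussian mixture $f_{u_N}(u)=\int_0^\infty(2\pi\sigma_N^2(c))^{-1/2}\exp(-(u-\bar\mu_N(c))^2/(2\sigma_N^2(c)))\,f_{\alpha^2}(c)\,\dif c$. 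Here the hypothesis $(\Sigma_N^{-1})_{11}\leq C$ enters through the elementary inequality (Cauchy-Schwarz in the inner product induced by $\Sigma_N$) giving $\sigma_N^2(c)\geq(\e^{-\pi^2 c t}\sin(\pi x))^2/(\Sigma_N^{-1})_{11}\geq\e^{-2\pi^2 c t}\sin^2(\pi x)/C$, a lower bound uniform in $N$. Consequently each conditional Gaussian has peak height at most $\sqrt{C}\,\e^{\pi^2 c t}/(\sqrt{2\pi}\sin(\pi x))$ and reciprocal variance at most $C\,\e^{2\pi^2 c t}/\sin^2(\pi x)$, uniformly in $N$.

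For the $\leb^\infty(\mathbb{R})$ convergence I would estimate $\|f_{u_{N'}}-f_{u_N}\|_{\leb^\infty}$, $N'>N$, by comparing the two mixtures inside the $c$-integral. The increments of the conditional means and variances are tails $\sum_{n>N}(\cdots)\e^{-n^2\pi^2 c t}$, controlled by $\|A_n\|_{\leb^2(\Omega)}\leq2\|\phi\|_{\leb^2([0,1]\times\Omega)}$ (as shown in the proof of the earlier $\leb^2$-solution theorem) and $|\Cov[A_n,A_k]|\leq\|A_n\|_{\leb^2(\Omega)}\|A_k\|_{\leb^2(\Omega)}$. The decisive cancellation of powers is that the reciprocal-variance factor $\e^{2\pi^2 c t}$ multiplied by a tail factor $\e^{-n^2\pi^2 c t}$ produces $\e^{-(n^2-2)\pi^2 c t}$; integrating against $f_{\alpha^2}$ turns this into $\|\e^{-(n^2-2)\pi^2\alpha^2 t}\|_{\leb^1(\Omega)}$, which is finite because the tail runs over $n\geq N+1\geq2$, and whose tail sum vanishes by the hypothesis $\sum_{n=m}^\infty\|\e^{-(n^2-2)\pi^2\alpha^2 t}\|_{\leb^1(\Omega)}<\infty$. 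This would show $\{f_{u_N}\}$ is Cauchy, hence convergent, in $\leb^\infty(\mathbb{R})$ to some $f$.

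It remains to identify $f$ as a density of $u(x,t)$. I would show $u_N\to u$ in $\leb^1(\Omega)$: by independence $\mathbb{E}[|A_n|\e^{-n^2\pi^2\alpha^2 t}]=\mathbb{E}[|A_n|]\,\mathbb{E}[\e^{-n^2\pi^2\alpha^2 t}]\leq2\|\phi\|_{\leb^2([0,1]\times\Omega)}\,\|\e^{-(n^2-2)\pi^2\alpha^2 t}\|_{\leb^1(\Omega)}$, so $\mathbb{E}|u-u_N|\leq\sum_{n>N}(\cdots)\to0$, again by the summability hypothesis; hence $u_N\to u$ in law. For continuity points $a<b$ of the law of $u$, uniform convergence gives $\int_a^b f_{u_N}\to\int_a^b f$, while weak convergence gives $\int_a^b f_{u_N}=\mathbb{P}[a\leq u_N\leq b]\to\mathbb{P}[a\leq u\leq b]$; therefore $\mathbb{P}[a\leq u\leq b]=\int_a^b f$, so $f$ is a density of $u(x,t)$. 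I expect the genuine difficulty to lie in the \emph{uniform-in-$u$} comparison of the two Gaussian mixtures in the third step: one must bound the sup-norm difference of two Gaussian densities of unequal mean and variance while tracking exactly how many powers of $\e^{\pi^2 c t}$ the variance lower bound injects, so that the surviving exponent meets the $n^2-2$ of the hypothesis and the resulting $c$-integral stays finite near both $c=0$ and $c=\infty$; making this bookkeeping quantitative, rather than merely qualitative, is the crux.
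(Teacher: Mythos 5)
Your first and last steps are sound and match the paper: the RVT derivation of (\ref{frrr}), and the identification of the $\leb^\infty$ limit as a density of $u(x,t)$ via convergence in law (your $\leb^1(\Omega)$ argument works just as well as the paper's a.s.\ argument). Your lower bound $\sigma_N^2(c)\geq \e^{-2\pi^2 ct}\sin^2(\pi x)/(\Sigma_N^{-1})_{11}$ is also correct (Cauchy--Schwarz in the $\Sigma_N$ inner product) and is a legitimate reading of the hypothesis. The gap is exactly where you place ``the crux'', and it is not a matter of bookkeeping: it cannot be closed under the stated hypotheses. Conditioning only on $\alpha^2=c$ forces you to compare two Gaussians differing in \emph{both} mean and variance. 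The mean shift is fine: Lipschitz constant $\lesssim 1/\sigma_N^2(c)\lesssim \e^{2\pi^2 ct}$ times a tail $\sum_{n>N}\e^{-n^2\pi^2 ct}$ gives exactly the exponents $n^2-2$. But the sup-norm distance between two equal-mean Gaussians is of \emph{exact} order $|\sigma_1^2-\sigma_2^2|/\sigma_{\min}^3$ (it is the peak-height difference, so the $\sigma^{-3}$ cannot be improved), and with your proposed control $|\Cov[A_n,A_k]|\leq\|A_n\|_{\leb^2(\Omega)}\|A_k\|_{\leb^2(\Omega)}$ the variance increment is only bounded by $\sum_{\max(n,k)>N}\e^{-(n^2+k^2)\pi^2 ct}$. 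The factor $\sigma_*^{-3}\lesssim\e^{3\pi^2 ct}$ then produces the double sum $\sum_{n\geq1}\sum_{k>N}\mathbb{E}[\e^{-(n^2+k^2-3)\pi^2\alpha^2 t}]$, whose finiteness does \emph{not} follow from the single-sum hypothesis. Concretely, take $\phi$ a Brownian bridge and $\alpha^2$ independent with density proportional to $c^{-1/4}$ on $(0,1]$: then $(\Sigma_N^{-1})_{11}=\pi^2/2$, and $\|\e^{-(n^2-2)\pi^2\alpha^2 t}\|_{\leb^1(\Omega)}\sim K n^{-3/2}$ is summable, so every hypothesis of the theorem holds; yet $\mathbb{E}[\e^{-(n^2+k^2-3)\pi^2\alpha^2 t}]\sim K(n^2+k^2)^{-3/4}$ and the double sum diverges for every $N$. (Cauchy--Schwarz splittings merely convert it into $\sum_n \|\e^{-(n^2-2)\pi^2\alpha^2 t}\|_{\leb^1(\Omega)}^{1/2}$, which is likewise not implied.) So your estimate proves the theorem only under stronger assumptions, e.g.\ $\alpha^2\geq a>0$ a.s.

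The idea you are missing is the paper's device for avoiding any variance comparison. For $N>M$, write \emph{both} $f_{u_N(x,t)}$ and $f_{u_M(x,t)}$ as marginals over the same variables $(a_2,\ldots,a_N,\alpha^2)$, by applying Lemma \ref{lema_abscont} to the full vector $(A_1,\ldots,A_N,\alpha^2)$ even for the shorter truncation $u_M$. Then the two integrands contain the \emph{same} density $f_{(A_1,\ldots,A_N)}$, evaluated at two points that differ only in the first coordinate, by $\frac{\e^{\pi^2\alpha^2 t}}{\sin(\pi x)}\sum_{n=M+1}^N a_n\e^{-n^2\pi^2\alpha^2 t}\sin(n\pi x)$. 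Factoring $f_{(A_1,\ldots,A_N)}=f_{A_1|(A_2,\ldots,A_N)}\,f_{(A_2,\ldots,A_N)}$ and invoking Lemma \ref{lema_norm_cond}, the only object being compared at two points is the one-dimensional conditional density of $A_1$, a Gaussian whose variance is $1/(\Sigma_N^{-1})_{11}\geq 1/C$, hence Lipschitz with a constant uniform in $N$ and in the conditioning values. The Lipschitz bound then yields the single sum $\frac{L}{\sin^2(\pi x)}\sum_{n=M+1}^N\mathbb{E}[|A_n|]\,\mathbb{E}[\e^{-(n^2-2)\pi^2\alpha^2 t}]$, which the hypothesis controls directly: only means shift, and no variances are ever compared.
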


\begin{proof}
Let us see that $\{f_{u_N(x,t)}(u)\}_{N=1}^\infty$ is Cauchy in $\leb^{\infty}(\mathbb{R})$, for $0<x<1$ and $t>0$ fixed. 

Fix two indexes $N>M$. Computing marginals, we know that:
\[ f_{u_N(x,t)}(u)=\int_{\mathbb{R}^N}f_{(u_N(x,t),A_2,\ldots,A_N,\alpha^2)}(u,a_2,\ldots,a_N,\alpha^2)\,\dif a_2\cdots \dif a_N\,\dif \alpha^2, \]
\[ f_{u_M(x,t)}(u)=\int_{\mathbb{R}^N}f_{(u_M(x,t),A_2,\ldots,A_N,\alpha^2)}(u,a_2,\ldots,a_N,\alpha^2)\,\dif a_2\cdots \dif a_N\,\dif \alpha^2, \]
which gives rise to our first estimate
\begin{align*}
|f_{u_N(x,t)}(u)-f_{u_M(x,t)}(u)|\leq {} &  \int_{\mathbb{R}^N}|f_{(u_N(x,t),A_2,\ldots,A_N,\alpha^2)}(u,a_2,\ldots,a_N,\alpha^2) \\
- & f_{(u_M(x,t),A_2,\ldots,A_N,\alpha^2)}(u,a_2,\ldots,a_N,\alpha^2)|\,\dif a_2\cdots \dif a_N\,\dif \alpha^2.  
\end{align*}

Now we compute the two probability density functions from the integrand of this last expression, by making use of Lemma \ref{lema_abscont}. Let
\[g(A_1,\ldots,A_N,\alpha^2)=\left(\sum_{n=1}^N A_n\e^{-n^2\pi^2\alpha^2t}\sin(n\pi x),A_2,\ldots,A_N,\alpha^2\right).\]
In the notation of Lemma \ref{lema_abscont}, $D=\mathbb{R}^{N+1}$, $g(D)=\mathbb{R}^{N+1}$,
\[h(A_1,\ldots,A_N,\alpha^2)=\left(\frac{\e^{\pi^2\alpha^2t}}{\sin(\pi x)}\left\{A_1-\sum_{n=2}^N A_n \e^{-n^2\pi^2\alpha^2 t}\sin(n \pi x)\right\}, A_2,\ldots,A_N,\alpha^2\right) \]
and
\[Jh(A_1,\ldots,A_N,\alpha^2)=\frac{\e^{\pi^2\alpha^2 t}}{\sin(\pi x)}>0.\]
Then
\begin{align*}
{} & f_{(u_N(x,t),A_2,\ldots,A_N,\alpha^2)}(u,a_2,\ldots,a_N,\alpha^2) \\
= & f_{(A_1,\ldots,A_N,\alpha^2)} \left(\frac{\e^{\pi^2\alpha^2t}}{\sin(\pi x)}\left\{u-\sum_{n=2}^N a_n \e^{-n^2\pi^2\alpha^2 t}\sin(n \pi x)\right\}, a_2,\ldots,a_N,\alpha^2\right)\frac{\e^{\pi^2\alpha^2 t}}{\sin(\pi x)}. 
\end{align*}
Similarly, by defining
\[g(A_1,\ldots,A_N,\alpha^2)=\left(\sum_{n=1}^M A_n \e^{-n^2\pi^2\alpha^2t}\sin(n\pi x),A_2,\ldots,A_N,\alpha^2\right), \]
we arrive at
\begin{align*}
{} & f_{(u_M(x,t),A_2,\ldots,A_N,\alpha^2)}(u,a_2,\ldots,a_N,\alpha^2) \\
= & f_{(A_1,\ldots,A_N,\alpha^2)} \left(\frac{\e^{\pi^2\alpha^2t}}{\sin(\pi x)}\left\{u-\sum_{n=2}^M a_n \e^{-n^2\pi^2\alpha^2 t}\sin(n \pi x)\right\}, a_2,\ldots,a_N,\alpha^2\right)\frac{\e^{\pi^2\alpha^2 t}}{\sin(\pi x)}. 
\end{align*}
Thus, using the independence between $\alpha^2$ and $(A_1,\ldots,A_N)$, one gets
\small
\begin{align}
{} & |f_{u_N(x,t)}(u)-f_{u_M(x,t)}(u)| \nonumber \\
\leq & \int_{\mathbb{R}^N} \frac{\e^{\pi^2\alpha^2 t}}{\sin(\pi x)}f_{\alpha^2}(\alpha^2) \bigg| f_{(A_1,\ldots,A_N)} \left(\frac{\e^{\pi^2\alpha^2t}}{\sin(\pi x)}\left\{u-\sum_{n=2}^N a_n \e^{-n^2\pi^2\alpha^2 t}\sin(n \pi x)\right\}, a_2,\ldots,a_N\right) \nonumber \\
- & f_{(A_1,\ldots,A_N)} \left(\frac{\e^{\pi^2\alpha^2t}}{\sin(\pi x)}\left\{u-\sum_{n=2}^M a_n \e^{-n^2\pi^2\alpha^2 t}\sin(n \pi x)\right\}, a_2,\ldots,a_N\right) \bigg| \,\dif a_2\cdots \dif a_N\,\dif \alpha^2. \label{fins_aci} 
\end{align}
\normalsize
We write $f_{(A_1,\ldots,A_N)}(a_1,a_2,\ldots,a_N)=f_{A_1|(A_2,\ldots,A_N)}(a_1|a_2,\ldots,a_N)f_{(A_2,\ldots,A_N)}(a_2,\ldots,a_N)$:
\small
\begin{align}
{} & |f_{u_N(x,t)}(u)-f_{u_M(x,t)}(u)| \nonumber \\
\leq & \int_{\mathbb{R}^N} \frac{\e^{\pi^2\alpha^2 t}}{\sin(\pi x)}f_{\alpha^2}(\alpha^2) \bigg| f_{A_1|(A_2,\ldots,A_N)} \left(\frac{\e^{\pi^2\alpha^2t}}{\sin(\pi x)}\left\{u-\sum_{n=2}^N a_n \e^{-n^2\pi^2\alpha^2 t}\sin(n \pi x)\right\}\big| a_2,\ldots,a_N\right) \nonumber \\
- & f_{A_1|(A_2,\ldots,A_N)} \left(\frac{\e^{\pi^2\alpha^2t}}{\sin(\pi x)}\left\{u-\sum_{n=2}^M a_n \e^{-n^2\pi^2\alpha^2 t}\sin(n \pi x)\right\}\big| a_2,\ldots,a_N\right) \bigg| \nonumber \\
\cdot & f_{(A_2,\ldots,A_N)}(a_2,\ldots,a_N) \,\dif a_2\cdots \dif a_N\,\dif \alpha^2. \label{a1cond}
\end{align}
\normalsize
Partition
\[\mu_N=\begin{pmatrix} (\mu_N)_1\\ \mu_N^{(2)}\end{pmatrix},\quad \Sigma_N=\begin{pmatrix} (\Sigma_N)_{11} & (\sigma_N^{(2)})^{T}\\ \sigma_N^{(2)}& \Sigma_N^{(2)}\end{pmatrix},      \]
where $(\mu_N)_1\in\mathbb{R}$, $\mu_N^{(2)}\in\mathbb{R}^{N-1}$, $(\Sigma_N)_{11}\in\mathbb{R}$, $\Sigma_N^{(2)}\in\mathbb{R}^{(N-1)\times (N-1)}$ and $\sigma_N^{(2)}\in\mathbb{R}^{N-1}$, and denote $\alpha_N=(a_2,\ldots,a_N)^{T}$. By Lemma \ref{lema_norm_cond}, $A_1|(A_2=a_2,\ldots,A_N=a_N)$ follows a normal distribution with mean $\bar{\mu}_N$ and variance $\bar{\Sigma}_N$, where 
\[\bar{\mu}_N=(\mu_N)_1+(\sigma_N^{(2)})^T(\Sigma_N^{(2)})^{-1}(\alpha_N-\mu_N^{(2)}),\quad \bar{\Sigma}_N=(\Sigma_N)_{11}-(\sigma_N^{(2)})^T(\Sigma_N^{(2)})^{-1}\sigma_N^{(2)}.\]
Write explicitly
\[ f_{A_1|(A_2,\ldots,A_N)}(x|a_2,\ldots,a_N)=\frac{1}{\sqrt{2\pi\bar{\Sigma}_N}}\e^{-\frac{(x-\bar{\mu}_N)^2}{2\bar{\Sigma}_N}}. \]
The maximum on $\mathbb{R}$ of
\[ \left|\frac{\dif}{\dif x}f_{A_1|(A_2,\ldots,A_N)}(x|a_2,\ldots,a_N)\right|=\frac{1}{\sqrt{2\pi\bar{\Sigma}_N}}\frac{|x-\bar{\mu}_N|}{\bar{\Sigma}_N} \e^{-\frac{(x-\bar{\mu}_N)^2}{2\bar{\Sigma}_N}} \]
is $\e^{-1/2}(1/\sqrt{2\pi})(1/\bar{\Sigma}_N)$. To bound $1/\bar{\Sigma}_N$, notice that
\[ 1/\bar{\Sigma}_N=\frac{1}{(\Sigma_N)_{11}-(\sigma_N^{(2)})^T(\Sigma_N^{(2)})^{-1}\sigma_N^{(2)}}=(\Sigma_N^{-1})_{11}\leq C, \footnote{It is well-known that the inverse of a matrix divided in four blocks is the following:
\[ \begin{pmatrix} A & B \\ C & D\end{pmatrix}^{-1}=\begin{pmatrix} (A-BD^{-1}C)^{-1} & -(A-BD^{-1}C)^{-1} BD^{-1} \\ -D^{-1}C(A-BD^{-1}C)^{-1} & D^{-1}+D^{-1}C(A-BD^{-1}C)^{-1}BD^{-1} \end{pmatrix}, \]
whenever the inverses from the right-hand side of the equality exist (see Theorem 2.1 (ii) in \cite{xinesos}).} \]
where $C$ is independent of $x$, $\alpha_N=(a_2,\ldots,a_N)^T$ and $N$ by hypothesis. By the Mean Value Theorem,
\[ |f_{A_1|(A_2,\ldots,A_N)}(x_1|a_2,\ldots,a_N)-f_{A_1|(A_2,\ldots,A_N)}(x_2|a_2,\ldots,a_N)|\leq L |x_1-x_2|, \]
where $L=\e^{-1/2}C/\sqrt{2\pi}$ represents the Lipschitz constant.

Going back to (\ref{a1cond}) and using the definition of the expectation via an integral and the independence of $\alpha^2$ and $(A_1,\ldots,A_N)$, one deduces 
\begin{align}
{} & |f_{u_N(x,t)}(u)-f_{u_M(x,t)}(u)| \label{igual} \\
\leq & L\int_{\mathbb{R}^N} \frac{\e^{2\pi^2\alpha^2 t}}{\sin^2(\pi x)}\left(\sum_{n=M+1}^N |a_n|\e^{-n^2\pi^2\alpha^2 t}\right)f_{\alpha^2}(\alpha^2)f_{(A_2,\ldots,A_N)}(a_2,\ldots,a_N)\,\dif a_2\cdots \dif a_N\,\dif \alpha^2 \nonumber \\
= & \frac{L}{\sin^2(\pi x)} \sum_{n=M+1}^N \mathbb{E}[|A_n|\e^{-(n^2-2)\pi^2\alpha^2 t}]= \frac{L}{\sin^2(\pi x)} \sum_{n=M+1}^N \mathbb{E}[|A_n|]\,\mathbb{E}[\e^{-(n^2-2)\pi^2\alpha^2 t}] \nonumber \\
\leq & \frac{2\,\|\phi\|_{\leb^2([0,1]\times\Omega)}\, L}{\sin^2(\pi x)}\sum_{n=M+1}^N \| \e^{-(n^2-2)\pi^2\alpha^2 t}\|_{\leb^1(\Omega)}. \nonumber
\end{align}
In the last inequality, we used the following bound:
\begin{align*} \mathbb{E} {} & [|A_n|]\leq \mathbb{E}[A_n^2]^{\frac12}\leq \left(4\int_0^1\int_0^1 \mathbb{E}[|\phi(y)||\phi(z)|]\,\dif y\,\dif z\right)^{\frac12} \\
\leq & \left(4\int_0^1\int_0^1 \mathbb{E}[\phi(y)^2]^{\frac12}\mathbb{E}[\phi(z)^2]^{\frac12}\,\dif y\,\dif z\right)^{\frac12}=2\,\int_0^1\mathbb{E}[\phi(y)^2]^{\frac12}\,\dif y\leq 2\, \|\phi\|_{\leb^2([0,1]\times\Omega)}. 
\end{align*}

Since we assume that $\sum_{n=m}^\infty \|\e^{-(n^2-2)\pi^2\alpha^2 t}\|_{\leb^1(\Omega)}<\infty$, we conclude that $\{f_{u_N(x,t)}(u)\}_{N=1}^\infty$ is Cauchy in $\leb^{\infty}(\mathbb{R})$. 

Let 
\[ g_{x,t}(u)=\lim_{N\rightarrow\infty}f_{u_N(x,t)}(u),\quad u\in\mathbb{R}. \]
We need to prove that $g_{x,t}$ is a density of the random variable $u(x,t)$ given in (\ref{sol2}).

First, notice that $g_{x,t}\in \leb^1(\mathbb{R})$, since by Fatou's Lemma
\[ \int_{\mathbb{R}} g_{x,t}(u)\,\dif u=\int_{\mathbb{R}} \lim_{N\rightarrow\infty}f_{u_N(x,t)}(u)\,\dif u\leq \liminf_{N\rightarrow\infty} \underbrace{\int_{\mathbb{R}} f_{u_N(x,t)}(u)\,\dif u}_{=1}=1<\infty. \]

On the other hand, for $0<x<1$, $t>0$ and a.e. $\omega\in\Omega$, the series in (\ref{sol2}) converges in $\mathbb{R}$, therefore $u_N(x,t)(\omega)\rightarrow u(x,t)(\omega)$ as $n\rightarrow\infty$ a.s., which implies convergence in law: 
\[F_{u_N(x,t)}(u)\stackrel{n\rightarrow\infty}{\longrightarrow} F_{u(x,t)}(u), \]
for all $u\in \mathbb{R}$ which is a point of continuity of $F_{u(x,t)}$.

Since $f_{u_N(x,t)}$ is the density of $u_N(x,t)$,
\[ F_{u_N(x,t)}(u)=F_{u_N(x,t)}(u_0)+\int_{u_0}^u f_{u_N(x,t)}(v)\,\dif v. \]
If $u$ and $u_0$ are points of continuity of $F_{u(x,t)}$, taking limits when $N\rightarrow\infty$ we get
\[ F_{u(x,t)}(u)=F_{u(x,t)}(u_0)+\int_{u_0}^u g_{x,t}(v)\,\dif v \]
(recall that $\{f_{u_N(x,t)}\}_{N=1}^\infty$ converges to $g_{x,t}$ in $\leb^{\infty}(\mathbb{R})$, so we can interchange the limit and the integral). As the points of discontinuity of $F_{u(x,t)}$ are countable and $F_{u(x,t)}$ is right continuous, we obtain 
\[ F_{u(x,t)}(u)=F_{u(x,t)}(u_0)+\int_{u_0}^u g_{x,t}(v)\,\dif v \]
for all $u_0$ and $u$ in $\mathbb{R}$.

Thus, $g_{x,t}=f_{u(x,t)}$ is a density for $u(x,t)$, as wanted. 

\end{proof}

\begin{theorem} \label{teor2}
Let $\{\phi(x):\,0\leq x\leq 1\}$ be a process in $\leb^2(\Omega\times [0,1])$. Suppose that $\alpha^2$, $A_1$ and $(A_2,\ldots,A_N)$ are independent and absolutely continuous, for $N\geq2$. Suppose that the probability density function $f_{A_1}$ is Lipschitz on $\mathbb{R}$. Assume that $\sum_{n=m}^\infty \|\e^{-(n^2-2)\pi^2\alpha^2 t}\|_{\leb^1(\Omega)}<\infty$ for certain $m\in\mathbb{N}$. Then the density of $u_N(x,t)(\omega)$,
\small
\begin{align}
 f_{u_N(x,t)}(u)= {} & \int_{\mathbb{R}^N} f_{A_1} \left(\frac{\e^{\pi^2\alpha^2t}}{\sin(\pi x)}\left\{u-\sum_{n=2}^N a_n \e^{-n^2\pi^2\alpha^2 t}\sin(n \pi x)\right\}\right)f_{(A_2,\ldots,A_N)} (a_2,\ldots,a_N) \nonumber \\
\cdot & f_{\alpha^2}(\alpha^2)\frac{\e^{\pi^2\alpha^2 t}}{\sin(\pi x)}\,\dif a_2\cdots \dif a_N\,\dif \alpha^2, \label{fr}
\end{align}
\normalsize 
converges in $\leb^\infty(\mathbb{R})$ to a density of the random variable $u(x,t)(\omega)$ given in (\ref{sol2}), for $0<x<1$ and $t>0$.
\end{theorem}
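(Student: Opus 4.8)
The plan is to follow the architecture of the proof of Theorem \ref{teor1} almost verbatim, replacing the Gaussian conditioning machinery by the two direct hypotheses of this statement. First I would apply the Random Variable Transformation technique (Lemma \ref{lema_abscont}) to the same map $g(A_1,\ldots,A_N,\alpha^2)=\left(\sum_{n=1}^N A_n\e^{-n^2\pi^2\alpha^2 t}\sin(n\pi x),A_2,\ldots,A_N,\alpha^2\right)$, whose inverse $h$ and Jacobian $Jh=\e^{\pi^2\alpha^2 t}/\sin(\pi x)>0$ are exactly as before. Because $\alpha^2$, $A_1$ and $(A_2,\ldots,A_N)$ are mutually independent, the joint density factorizes as $f_{A_1}(a_1)\,f_{(A_2,\ldots,A_N)}(a_2,\ldots,a_N)\,f_{\alpha^2}(\alpha^2)$, and marginalizing over $(A_2,\ldots,A_N,\alpha^2)$ yields precisely the formula (\ref{fr}). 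Note that here \emph{no} Gaussianity of $\phi$ is assumed, so the vector $(A_1,\ldots,A_N)$ need not be Gaussian.

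The core of the argument is to show that $\{f_{u_N(x,t)}\}_{N=1}^\infty$ is Cauchy in $\leb^\infty(\mathbb{R})$ for fixed $0<x<1$ and $t>0$. Fixing $N>M$ and writing both densities as marginals of the corresponding joint densities, the same triangle-inequality estimate as in Theorem \ref{teor1} reduces $|f_{u_N(x,t)}(u)-f_{u_M(x,t)}(u)|$ to an integral of $|f_{A_1}(\xi_N)-f_{A_1}(\xi_M)|$ weighted by $f_{\alpha^2}(\alpha^2)\,f_{(A_2,\ldots,A_N)}(a_2,\ldots,a_N)\,\e^{\pi^2\alpha^2 t}/\sin(\pi x)$, where the two arguments $\xi_N,\xi_M$ of $f_{A_1}$ differ only by the tail $\frac{\e^{\pi^2\alpha^2 t}}{\sin(\pi x)}\sum_{n=M+1}^N a_n\,\e^{-n^2\pi^2\alpha^2 t}\sin(n\pi x)$. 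At exactly the point where Theorem \ref{teor1} invokes the Gaussian conditional density and the block-matrix bound $(\Sigma_N^{-1})_{11}\leq C$, here I would instead use the single hypothesis that $f_{A_1}$ is Lipschitz with constant $L$, giving directly $|f_{A_1}(\xi_N)-f_{A_1}(\xi_M)|\leq L\,\frac{\e^{\pi^2\alpha^2 t}}{\sin(\pi x)}\sum_{n=M+1}^N |a_n|\,\e^{-n^2\pi^2\alpha^2 t}$.

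Substituting this bound and recognizing the resulting integral as an expectation, I would obtain $|f_{u_N(x,t)}(u)-f_{u_M(x,t)}(u)|\leq \frac{L}{\sin^2(\pi x)}\sum_{n=M+1}^N \mathbb{E}[|A_n|\,\e^{-(n^2-2)\pi^2\alpha^2 t}]$. Since every index $n\geq M+1\geq 2$ belongs to the block $(A_2,\ldots,A_N)$, which is independent of $\alpha^2$, this factorizes as $\mathbb{E}[|A_n|]\,\mathbb{E}[\e^{-(n^2-2)\pi^2\alpha^2 t}]$; bounding $\mathbb{E}[|A_n|]\leq 2\|\phi\|_{\leb^2([0,1]\times\Omega)}$ exactly as in Theorem \ref{teor1} and invoking the summability hypothesis $\sum_{n=m}^\infty \|\e^{-(n^2-2)\pi^2\alpha^2 t}\|_{\leb^1(\Omega)}<\infty$ shows the tail sum vanishes as $M\to\infty$, which yields the Cauchy property and hence an $\leb^\infty$-limit $g_{x,t}$.

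Finally, to certify that $g_{x,t}$ is genuinely a density of $u(x,t)$, I would reproduce the closing argument of Theorem \ref{teor1} without change: Fatou's Lemma gives $g_{x,t}\in\leb^1(\mathbb{R})$ with $\int_{\mathbb{R}}g_{x,t}\leq 1$; the a.s. convergence $u_N(x,t)\to u(x,t)$ (valid since $\phi\in\leb^2([0,1]\times\Omega)$ implies $\phi(\cdot)(\omega)\in\leb^1(0,1)$ a.s.) gives convergence in law, so $F_{u_N(x,t)}\to F_{u(x,t)}$ at every continuity point; and the $\leb^\infty$-convergence of the densities lets me pass to the limit in $F_{u_N(x,t)}(u)=F_{u_N(x,t)}(u_0)+\int_{u_0}^u f_{u_N(x,t)}(v)\,\dif v$, identifying $g_{x,t}$ with $f_{u(x,t)}$ after using right-continuity and the countability of the discontinuity set of $F_{u(x,t)}$. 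The point I would stress is that this proof is in fact structurally \emph{simpler} than Theorem \ref{teor1}: the block-independence of $A_1$ from $(A_2,\ldots,A_N)$ supplies the factorization for free, and the assumed Lipschitz regularity of $f_{A_1}$ supplies the modulus-of-continuity bound directly, so the genuinely delicate step of the Gaussian proof — the uniform control of the conditional variance $1/\bar\Sigma_N$ through $(\Sigma_N^{-1})_{11}\leq C$ — simply never arises.
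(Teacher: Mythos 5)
Your proposal is correct and follows essentially the same route as the paper's own proof: the paper likewise reuses the argument of Theorem \ref{teor1} verbatim up to expression (\ref{fins_aci}), then uses the mutual independence of $\alpha^2$, $A_1$ and $(A_2,\ldots,A_N)$ to replace the Gaussian conditional-density machinery by the assumed Lipschitz constant $L$ of $f_{A_1}$, and proceeds unchanged from step (\ref{igual}). Your added observations — that the bound $\mathbb{E}[|A_n|]\leq 2\|\phi\|_{\leb^2([0,1]\times\Omega)}$ needs only Cauchy--Schwarz (no Gaussianity), and that every index $n\geq M+1\geq 2$ lies in the block independent of $\alpha^2$ — are exactly the details the paper leaves implicit.
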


\begin{proof}
The proof goes the same as Theorem \ref{teor1} up to expression (\ref{fins_aci}). In (\ref{fins_aci}), we use the independence of $\alpha^2$, $A_1$ and $(A_2,\ldots,A_N)$:
\small
\begin{align*}
{} & |f_{u_N(x,t)}(u)-f_{u_M(x,t)}(u)| \\
\leq & \int_{\mathbb{R}^N} \frac{\e^{\pi^2\alpha^2 t}}{\sin(\pi x)}f_{\alpha^2}(\alpha^2) \bigg| f_{A_1} \left(\frac{\e^{\pi^2\alpha^2t}}{\sin(\pi x)}\left\{u-\sum_{n=2}^N a_n \e^{-n^2\pi^2\alpha^2 t}\sin(n \pi x)\right\}\right) \\
- & f_{A_1} \left(\frac{\e^{\pi^2\alpha^2t}}{\sin(\pi x)}\left\{u-\sum_{n=2}^M a_n \e^{-n^2\pi^2\alpha^2 t}\sin(n \pi x)\right\}\right) \bigg| \\
\cdot & f_{(A_2,\ldots,A_N)}(a_2,\ldots,a_N) \,\dif a_2\cdots \dif a_N\,\dif \alpha^2. 
\end{align*}
\normalsize
Denoting by $L$ the Lipschitz constant of $f_{A_1}$, we proceed exactly as in the previous proof from step (\ref{igual}).
\end{proof}

\begin{remark} The density of $u_N(x,t)(\omega)$, as we saw in the proof of Theorem \ref{teor1}, is (\ref{frrr}). In the case that $\phi$ is a Gaussian process, we know by Lemma \ref{lema_norm} that $(A_1,\ldots,A_N)$ is a multivariate Gaussian random vector. In the case that $(A_1,\ldots,A_N)$ is absolutely continuous (that is, $\det(\Sigma_N)>0$), $\alpha^2$ is absolutely continuous and independent, we can compute $f_{u_N(x,t)}(u)$. Under the assumptions of Theorem \ref{teor1} or Theorem \ref{teor2}, $f_{u_N(x,t)}(u)$ is approximately a density of (\ref{sol2}) for large $N$.
\end{remark}

\begin{remark} In Theorem \ref{teor2}, if $\phi$ is a Gaussian process then the hypothesis $f_{A_1}$ Lipschitz on $\mathbb{R}$ holds, since the density function of a normal distribution is Lipschitz (its derivative is bounded on $\mathbb{R}$). On the other hand, the common hypothesis in Theorem \ref{teor1} and Theorem \ref{teor2}, $\sum_{n=m}^\infty \|\e^{-(n^2-2)\pi^2\alpha^2 t}\|_{\leb^1(\Omega)}<\infty$ for certain $m\in\mathbb{N}$, holds for instance when $\alpha^2(\omega)\geq a>0$ for a.e. $\omega\in\Omega$.
\end{remark}

\begin{remark} The hypothesis $(\Sigma_N^{-1})_{11}\leq C$ for all $N\geq1$ would be very difficult to check in practice. Using the usual formula for the inverse of a matrix using the procedure of the ``adjoint matrix'', $(\Sigma_N^{-1})_{11}=\det(\Sigma_N^{(2)})/\det(\Sigma_N)$, where $\Sigma_N^{(2)}$ is the submatrix of $\Sigma_N$ obtained after deleting the first row and column from $\Sigma_N$. There are upper bounds for the determinant of a symmetric positive-definite matrix, for example, Hadamard's Determinant Theorem says that the determinant of a symmetric positive-definite matrix is bounded above by the product of its diagonal elements. However, no simple lower bounds are known for the determinant, so, at least to our knowledge, it is not possible to ensure that $(\Sigma_N^{-1})_{11}\leq C$ for all $N\geq1$ in general. 
\end{remark}

\begin{remark} \label{brownia_no_funciona} Lemma \ref{lema_norm} says that, if $\phi$ is a Gaussian process, then $(A_1,\ldots,A_N)$ is a multivariate Gaussian random vector for $N\geq1$. However, this does not mean that $(A_1,\ldots,A_N)$ is absolutely continuous, since it could be possible that $\det(\Sigma_N)=0$. 

This happens for example when $\phi(x)=W(x)$, where $W$ is a standard Brownian motion on $[0,1]$. Indeed, taking into account that $\Cov[W(y),W(z)]=\min\{y,z\}$,
\[ \Cov[A_n,A_m]=4\int_0^1\int_0^1 \min\{y,z\}\sin(n\pi y)\sin(m\pi z)\,\dif y\,\dif z=\frac{4(-1)^{m+n}}{m\,n\pi^2} \]
for $1\leq n,m\leq N$, so $\Sigma_N=4/\pi^2 (-1,1/2,-1/3,\ldots)^T(-1,1/2,-1/3,\ldots)$, and since $\mathrm{rank}(\mathrm{A}\mathrm{B})\leq\min\{\mathrm{rank}(\mathrm{A}),\mathrm{rank}(\mathrm{B})\}$ for any general matrices $\mathrm{A}$ and $\mathrm{B}$ that can be multiplied, we have $\mathrm{rank}(\Sigma_N)=1$. Hence, for $N\geq 2$, $\det(\Sigma_N)=0$, and Theorem \ref{teor1} and Theorem \ref{teor2} cannot be applied when $\phi$ is a Brownian motion. 

Nevertheless, the fact that the theorems cannot be applied when dealing with the Brownian motion is obvious, since problem (\ref{edp_determinista}) tells us that $\phi(1)=u(1,0)=0$, which is not true for a Brownian motion. Thus, it makes no sense to model the initial condition as a Brownian process.
\end{remark}

\begin{remark} \label{bridge_funciona} If $\phi(x)=B(x)$, where $B$ is a standard Brownian bridge on $[0,1]$, then $A_1,A_2,\ldots$ are independent, and we are in position of applying Theorem \ref{teor2}. Indeed, taking into account that $\Cov[B(y),B(z)]=\min\{y,z\}-yz$,
\[ \Cov[A_n,A_m]=4\int_0^1\int_0^1 (\min\{y,z\}-y\,z)\sin(n\pi y)\sin(m\pi z)\,\dif y\,\dif z=\begin{cases} 0,&\;n\neq m, \\ \frac{2}{n^2\pi^2},&\;n=m, \end{cases} \]
for $1\leq n,m\leq N$, and since $(A_1,\ldots,A_N)$ is multivariate Gaussian for all $N\geq1$ by Lemma \ref{lema_norm}, the independence of $A_1,A_2,\ldots$ follows.

Recall that the Brownian bridge has a zero value at $x=1$, so it does make sense to model the initial condition via a Brownian bridge, as opposed to Brownian motion.

Continuing with the computations, we have $A_1,A_2,\ldots$ independent and $A_n\sim\text{Normal}(0,2/(n^2\pi^2))$ for $n\geq1$, so
\[ f_{(A_1,\ldots,A_N)}(a_1,\ldots,a_n)=\left(\frac{\sqrt{\pi}}{2}\right)^N\,\prod_{n=1}^N n\,\e^{-\frac{n^2\pi^2 a_n^2}{4}}. \]
Thus,
\begin{align}
 f_{u_N(x,t)}(u)= {} & \left(\frac{\sqrt{\pi}}{2}\right)^N \int_{\mathbb{R}^N} \e^{-\frac{\pi^2}{4}\frac{\e^{2\pi^2\alpha^2 t}}{\sin^2(\pi x)}\left\{u-\sum_{n=2}^N a_n \e^{-n^2\pi^2\alpha^2 t}\sin(n \pi x)\right\}^2}\left(\prod_{n=2}^N n\,\e^{-\frac{n^2\pi^2 a_n^2}{4}}\right) \nonumber \\
\cdot & f_{\alpha^2}(\alpha^2)\frac{\e^{\pi^2\alpha^2 t}}{\sin(\pi x)}\,\dif a_2\cdots \dif a_N\,\dif \alpha^2. \label{fbb1}
\end{align}
\end{remark}

\section{Computing the probability density function under hypotheses on the Karhunen-Lo\`{e}ve expansion of \texorpdfstring{$\phi$}{phi}}

In this section we will use two lemmas. The first one is Lemma \ref{lema_abscont}. The second lemma is Karhunen-Lo\`{e}ve Theorem, which is proved in Theorem 5.28 of \cite{llibre_powell}.

\begin{lemma}[Karhunen-Lo\`{e}ve Theorem] \label{KLlemma}
Consider a process $\{X(t):\,t\in\mathcal{T}\}$ in $\leb^2(\mathcal{T}\times\Omega)$. Then
\[ X(t,\omega)=\mu(t)+\sum_{j=1}^\infty \sqrt{\nu_j}\,\phi_j(t)\xi_j(\omega), \]
where the sum converges in $\leb^2(\mathcal{T}\times\Omega)$, $\mu(t)=\mathbb{E}[X(t)]$, $\{\phi_j\}_{j=1}^\infty$ is an orthonormal basis of $\leb^2(\mathcal{T})$, $\{(\nu_j,\phi_j)\}_{j=1}^\infty$ is the set of pairs of (nonnegative) eigenvalues and eigenfunctions of the operator
\begin{equation}
 \mathcal{C}:\leb^2(\mathcal{T})\rightarrow \leb^2(\mathcal{T}),\; \mathcal{C}f(t)=\int_{\mathcal{T}} \Cov[X(t),X(s)]f(s)\,\dif s, 
 \label{karhC}
\end{equation}
and $\{\xi_j\}_{j=1}^\infty$ is a sequence of random variables with zero expectation, unit variance and pairwise uncorrelated. Moreover, if $\{X(t):\,t\in\mathcal{T}\}$ is a Gaussian process, then $\{\xi_j\}_{j=1}^\infty$ are independent and Gaussian.
\end{lemma}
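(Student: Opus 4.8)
The plan is to reduce the statement to the spectral analysis of the covariance operator $\mathcal{C}$ defined in (\ref{karhC}) and then to identify the random coefficients of the resulting eigenfunction expansion. Writing $K(t,s)=\Cov[X(t),X(s)]$ and $Y(t,\omega)=X(t,\omega)-\mu(t)$, I would first verify that $K\in\leb^2(\mathcal{T}\times\mathcal{T})$: by Cauchy--Schwarz $|K(t,s)|\leq \mathbb{E}[Y(t)^2]^{1/2}\mathbb{E}[Y(s)^2]^{1/2}$, and since $X\in\leb^2(\mathcal{T}\times\Omega)$ the map $t\mapsto\mathbb{E}[Y(t)^2]$ is integrable, so $K$ is square integrable on $\mathcal{T}\times\mathcal{T}$. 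Hence $\mathcal{C}$ is a Hilbert--Schmidt, and in particular compact, operator on $\leb^2(\mathcal{T})$. It is self-adjoint because $K(t,s)=K(s,t)$, and positive semidefinite because for every $f\in\leb^2(\mathcal{T})$ one has $\langle \mathcal{C}f,f\rangle=\mathbb{E}[(\int_{\mathcal{T}}Y(t)f(t)\,\dif t)^2]\geq0$. The spectral theorem for compact self-adjoint operators then yields an orthonormal basis $\{\phi_j\}_{j=1}^\infty$ of $\leb^2(\mathcal{T})$ consisting of eigenfunctions of $\mathcal{C}$ with real eigenvalues $\nu_j\geq0$.

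Next I would define the coefficients. For each $j$ with $\nu_j>0$, set
\[ \xi_j(\omega)=\frac{1}{\sqrt{\nu_j}}\int_{\mathcal{T}} Y(t,\omega)\phi_j(t)\,\dif t, \]
while for $\nu_j=0$ the corresponding term $\sqrt{\nu_j}\,\phi_j\xi_j$ vanishes and $\xi_j$ may be chosen to be any zero-mean unit-variance variable. These are genuine random variables (the integral is well-defined a.s. by Fubini, exactly as in Lemma \ref{lema_norm}). Using $\mathbb{E}[Y(t)]=0$ one gets $\mathbb{E}[\xi_j]=0$; and interchanging $\mathbb{E}$ with the two spatial integrals by Fubini,
\[ \mathbb{E}[\xi_j\xi_k]=\frac{1}{\sqrt{\nu_j\nu_k}}\int_{\mathcal{T}}\int_{\mathcal{T}}K(t,s)\phi_j(t)\phi_k(s)\,\dif t\,\dif s=\frac{1}{\sqrt{\nu_j\nu_k}}\langle \mathcal{C}\phi_k,\phi_j\rangle=\frac{\nu_k}{\sqrt{\nu_j\nu_k}}\delta_{jk}=\delta_{jk}, \]
so the $\xi_j$ have zero mean, unit variance and are pairwise uncorrelated.

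The core step is the convergence in $\leb^2(\mathcal{T}\times\Omega)$ of the partial sums $S_N(t,\omega)=\mu(t)+\sum_{j=1}^N\sqrt{\nu_j}\phi_j(t)\xi_j(\omega)$. Expanding the mean-square error and using the moment identities just proved, the cross terms collapse and one obtains
\[ \mathbb{E}\!\left[\int_{\mathcal{T}}|X(t)-S_N(t)|^2\,\dif t\right]=\int_{\mathcal{T}}\mathbb{E}[Y(t)^2]\,\dif t-\sum_{j=1}^N\nu_j. \]
Thus convergence is equivalent to the trace identity $\sum_{j=1}^\infty\nu_j=\int_{\mathcal{T}}K(t,t)\,\dif t$. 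This is the main obstacle: it is precisely Mercer's theorem, which gives the uniform (hence diagonal) convergence of $\sum_j\nu_j\phi_j(t)\phi_j(s)$ to $K(t,s)$, and whose standard hypotheses require $K$ to be continuous on $\mathcal{T}\times\mathcal{T}$ with $\mathcal{T}$ compact (equivalently, $X$ mean-square continuous). Under such regularity the identity holds and the series converges in $\leb^2(\mathcal{T}\times\Omega)$, as claimed.

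Finally, for the Gaussian case, each $\xi_j$ is an $\leb^2(\Omega)$-limit of linear combinations of the values $\{X(t)\}$ (the integral being such a limit, exactly as argued in the footnote and Hilbert-space construction of Lemma \ref{lema_norm}), so $\xi_j$ is Gaussian and in fact $(\xi_1,\ldots,\xi_N)$ is jointly Gaussian for every $N$. A jointly Gaussian, pairwise uncorrelated family is independent, which yields the last assertion.
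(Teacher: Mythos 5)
First, a point of reference: the paper does not prove this lemma itself; it simply cites Theorem 5.28 of \cite{llibre_powell}. So your attempt has to be judged on its own merits. The structural part of your argument is sound: $K\in\leb^2(\mathcal{T}\times\mathcal{T})$, hence $\mathcal{C}$ is Hilbert--Schmidt, self-adjoint and positive semidefinite; the spectral theorem gives the orthonormal eigenbasis; your definition of $\xi_j$ and the computation of its mean, variance and correlations are correct; the reduction of $\leb^2(\mathcal{T}\times\Omega)$-convergence to the trace identity $\sum_{j}\nu_j=\int_{\mathcal{T}}\Cov[X(t),X(t)]\,\dif t$ is a correct computation; and the Gaussian case is handled correctly.

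The genuine gap is in your last step. You assert that the trace identity ``is precisely Mercer's theorem'' and therefore conclude convergence only under extra hypotheses (compact $\mathcal{T}$, continuous covariance, i.e.\ mean-square continuity of $X$). The lemma as stated carries no such hypotheses: it asserts the expansion for an arbitrary process in $\leb^2(\mathcal{T}\times\Omega)$, so as written your proof establishes a strictly weaker statement. Moreover, the appeal to Mercer is unnecessary: the trace identity holds in full generality by a pathwise Parseval argument. Since $X\in\leb^2(\mathcal{T}\times\Omega)$, Fubini's theorem gives $Y(\cdot,\omega)\in\leb^2(\mathcal{T})$ for a.e.\ $\omega$, so Parseval's identity with respect to the orthonormal basis $\{\phi_j\}_{j=1}^\infty$ yields $\|Y(\cdot,\omega)\|_{\leb^2(\mathcal{T})}^2=\sum_{j=1}^\infty\langle Y(\cdot,\omega),\phi_j\rangle^2$ a.s.; taking expectations and using Tonelli's theorem,
\[ \int_{\mathcal{T}}\mathbb{E}[Y(t)^2]\,\dif t=\sum_{j=1}^\infty\mathbb{E}\!\left[\langle Y,\phi_j\rangle^2\right]=\sum_{j=1}^\infty\langle\mathcal{C}\phi_j,\phi_j\rangle_{\leb^2(\mathcal{T})}=\sum_{j=1}^\infty\nu_j, \]
which is exactly the trace identity (the middle equality is the same Fubini interchange you already justified when computing $\mathbb{E}[\xi_j\xi_k]$). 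Equivalently, and even more directly: $\mathbb{E}\|Y-\sum_{j\leq N}\langle Y,\phi_j\rangle\phi_j\|_{\leb^2(\mathcal{T})}^2=\mathbb{E}\sum_{j>N}\langle Y,\phi_j\rangle^2\rightarrow0$ by dominated convergence, since the tails decrease to $0$ a.s.\ and are bounded by $\|Y(\cdot,\omega)\|_{\leb^2(\mathcal{T})}^2\in\leb^1(\Omega)$; here $\langle Y,\phi_j\rangle=\sqrt{\nu_j}\,\xi_j$ a.s.\ (also when $\nu_j=0$, because then $\mathbb{E}[\langle Y,\phi_j\rangle^2]=\nu_j=0$). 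Mercer's theorem gives the much stronger uniform pointwise convergence of $\sum_j\nu_j\phi_j(t)\phi_j(s)$ to the covariance kernel, which is simply not needed for convergence in the $\leb^2(\mathcal{T}\times\Omega)$ norm. With this replacement, your argument proves the lemma in the stated generality.
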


\begin{remark}
When the operator $\mathcal{C}$ defined in (\ref{karhC}) has only a finite number of nonzero eigenvalues, then the process $X$ of Lemma \ref{KLlemma} can be expressed as a finite sum:
\[ X(t,\omega)=\mu(t)+\sum_{j=1}^I \sqrt{\nu_j}\,\phi_j(t)\xi_j(\omega). \]
In the subsequent development, we will write the data stochastic process $\phi$ via its Karhunen-Lo\`{e}ve expansion. The summation symbol in the expansion will be always written up to $\infty$ (the most difficult case), although it could be possible that its corresponding covariance integral operator $\mathcal{C}$ has only a finite number of nonzero eigenvalues. In such a case, in expression (\ref{NM3}) one has to interpret that the vector $(\xi_1,\ldots,\xi_{M-1})$ finishes at $M-1=I<\infty$, whereas the other index $N$ grows up to infinity. From (\ref{NM3}), the modifications are straightforward and easier than for $I=\infty$. Details are left to the reader.
\end{remark}

Take the truncation of (\ref{sol2}) from Notation \ref{uN}:
\[ u_N(x,t)(\omega)=\sum_{n=1}^N A_n(\omega)\e^{-n^2\pi^2\alpha^2(\omega)t}\sin(n\pi x), \]
where
\[ A_n(\omega)=2\int_0^1 \phi(y)(\omega)\sin(n\pi y)\,\dif y. \]

If $\phi\in \leb^2([0,1]\times\Omega)$, we can compute its Karhunen-Lo\`{e}ve expansion
\begin{equation}
 \phi(x)(\omega)=\mu_\phi(x)+\sum_{m=1}^\infty \sqrt{\nu_m} \,\phi_m(x)\xi_m(\omega), 
\label{klphi}
\end{equation}
where $\mu_\phi(x)=\mathbb{E}[\phi(x)]$ and $\{(\nu_m,\phi_m)\}_{m=1}^\infty$ is the set of pairs of (nonnegative) eigenvalues and eigenfunctions of the operator 
\[ \mathcal{C}:\leb^2(0,1)\rightarrow \leb^2(0,1),\; \mathcal{C}f(t)=\int_0^1\Cov[\phi(t),\phi(s)]f(s)\,\dif s. \]
We will assume that the sequence of pairs $\{(\nu_m,\phi_m)\}_{m=1}^\infty$ does not have a particular ordering. In practice, the ordering will be chosen so that the hypotheses of Theorem \ref{teor3} stated later on are satisfied (for example, if we say in the theorem that $\xi_1$ and $\phi_1$ have to satisfy a certain condition, then we can reorder the pairs of eigenvalues and eigenfunctions and the random variables $\xi_1,\xi_2,\ldots$ so that $\xi_1$ and $\phi_1$ satisfy the condition).

If we truncate the Karhunen-Lo\`{e}ve expression of $\phi$ up to an index $M-1$, we obtain a new truncation of (\ref{sol2}):
\small
\[ u_{N,M}(x,t)(\omega)=\sum_{n=1}^N \left\{2\int_0^1 \left(\mu_\phi(y)+\sum_{m=1}^{M-1}\sqrt{\nu_m}\,\phi_m(y)\xi_m(\omega)\right)\sin(n\pi y)\,\dif y\right\}\e^{-n^2\pi^2\alpha^2(\omega)t}\sin(n\pi x). \]
\normalsize

Using Lemma \ref{lema_abscont}, we compute the density $f_{u_{N,M}(x,t)}(u)$ of the random variable $u_{N,M}(x,t)(\omega)$. In order to simplify the notation, we introduce a new operator,
\small
\[ T_N(f)(x,t,\alpha^2)=\sum_{n=1}^N\hat{f}(n)\e^{-n^2\pi^2\alpha^2t}\sin(n\pi x), \]
\normalsize
where $\hat{f}(n)=\int_0^1 f(y)\sin(n\pi y)\,\dif y$,
for $f\in \leb^2(0,1)$. With this new notation, $u_{N,M}(x,t)(\omega)$ becomes
\small
\[ u_{N,M}(x,t)(\omega)=2\,T_N(\mu_\phi)(x,t,\alpha^2(\omega))+2\,\sum_{m=1}^{M-1}T_N(\phi_m)(x,t,\alpha^2(\omega))\sqrt{\nu_m}\,\xi_m(\omega). \]
\normalsize
In the notation of Lemma \ref{lema_abscont}, 
\small
\[ g(\xi_1,\ldots,\xi_{M-1},\alpha^2)=\left(2\,T_N(\mu_\phi)(x,t,\alpha^2)+2\,\sum_{m=1}^{M-1}T_N(\phi_m)(x,t,\alpha^2)\sqrt{\nu_m}\,\xi_m,\xi_2,\ldots,\xi_{M-1},\alpha^2\right), \]
\normalsize
$D=\mathbb{R}^{M-1}\times \mathcal{D}_N$, where $\mathcal{D}_N=\{\alpha^2>0:\,T_N(\phi_1)(x,t,\alpha^2)\neq0\}$, $g(D)=\mathcal{R}^{M-1}\times \mathcal{D}_N$,
\small
\[ h(\xi_1,\ldots,\xi_{M-1},\alpha^2)=\left(\frac{\frac{\xi_1}{2}-T_N(\mu_{\phi})(x,t,\alpha^2)-\sum_{m=2}^{M-1}T_N(\phi_m)(x,t,\alpha^2)\sqrt{\nu_m}\,\xi_m}{\sqrt{\nu_1}\,T_N(\phi_1)(x,t,\alpha^2)},\xi_2,\ldots,\xi_{M-1},\alpha^2 \right) \]
\normalsize
and 
\[ Jh(\xi_1,\ldots,\xi_{M-1},\alpha^2)=\frac{1}{2\sqrt{\nu_1}\,T_N(\phi_1)(x,t,\alpha^2)}\neq 0. \]

Computing marginals,
\footnotesize
\begin{align}
{} & f_{u_{N,M}(x,t)}(u) \nonumber \\
= & \int_{\mathbb{R}^{M-2}\times\mathcal{D}_N}f_{(\xi_1,\ldots,\xi_{M-1},\alpha^2)}\bigg(\frac{\frac{u}{2}-T_N(\mu_{\phi})(x,t,\alpha^2)-\sum_{m=2}^{M-1}T_N(\phi_m)(x,t,\alpha^2)\sqrt{\nu_m}\,\xi_m}{\sqrt{\nu_1}\,T_N(\phi_1)(x,t,\alpha^2)},\xi_2,\ldots,\xi_{M-1},\alpha^2 \bigg) \nonumber \\
\cdot & \frac{1}{2\sqrt{\nu_1}\,|T_N(\phi_1)(x,t,\alpha^2)|}\,\dif \xi_2\cdots \dif \xi_{M-1}\,\dif \alpha^2. \label{NM3} 
\end{align}
\normalsize

To simplify this function and without loss of generality, we put $N=M$ so that we have a unique index:
\footnotesize
\begin{align}
{} & f_{u_{N,N}(x,t)}(u) \nonumber \\
= & \int_{\mathbb{R}^{N-2}\times\mathcal{D}_N}f_{(\xi_1,\ldots,\xi_{N-1},\alpha^2)}\bigg(\frac{\frac{u}{2}-T_N(\mu_{\phi})(x,t,\alpha^2)-\sum_{m=2}^{N-1}T_N(\phi_m)(x,t,\alpha^2)\sqrt{\nu_m}\,\xi_m}{\sqrt{\nu_1}\,T_N(\phi_1)(x,t,\alpha^2)},\xi_2,\ldots,\xi_{N-1},\alpha^2 \bigg) \nonumber \\
\cdot & \frac{1}{2\sqrt{\nu_1}\,|T_N(\phi_1)(x,t,\alpha^2)|}\,\dif \xi_2\cdots \dif \xi_{N-1}\,\dif \alpha^2. \label{fnn}
\end{align}
\normalsize

In the following theorem, we establish conditions under which $\{f_{u_{N,N}(x,t)}\}_{N=1}^\infty$ converges to a density of the random variable $u(x,t)(\omega)$ defined in (\ref{sol2}).

\begin{theorem} \label{teor3}
Let $\{\phi(x):0\leq x\leq 1\}$ be a process in $\leb^2([0,1]\times\Omega)$ such that its Karhunen-Lo\`{e}ve expansion given in (\ref{klphi}) satisfies that $\alpha^2$, $\xi_1$ and $(\xi_2,\ldots,\xi_{N-1})$ are absolutely continuous and independent random vectors, $N\geq3$. Suppose that the density $f_{\xi_1}$ is Lipschitz on $\mathbb{R}$ and $\alpha^2(\omega)\in \mathcal{D}:=\cap_{N=1}^\infty \mathcal{D}_N$ for a.e. $\omega\in\Omega$. Assume that $\sum_{n=1}^\infty \|\e^{-n^2\pi^2\alpha^2 t}\|_{\leb^2(\Omega)}<\infty$ and $|T_N(\phi_1)(x,t,\alpha^2(\omega))|\geq C(x,t)>0$ for a.e. $\omega\in\Omega$ and $N$, where $\phi_1$ is the first eigenfunction in the Karhunen-Lo\`{e}ve expansion (\ref{klphi}) of $\phi$. Then the density of $u_{N,N}(x,t)(\omega)$ converges in $\leb^\infty(K)$ for every bounded set $K\subseteq\mathbb{R}$, to a density of the random variable $u(x,t)(\omega)$ given in (\ref{sol2}), for $0<x<1$ and $t>0$.
\end{theorem}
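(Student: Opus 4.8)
The plan is to follow the same template as the proofs of Theorems \ref{teor1} and \ref{teor2}: first show that $\{f_{u_{N,N}(x,t)}(u)\}_{N=1}^\infty$ is Cauchy in $\leb^\infty(K)$ for every bounded $K\subseteq\mathbb{R}$, then define the pointwise limit $g_{x,t}(u)=\lim_{N\rightarrow\infty}f_{u_{N,N}(x,t)}(u)$ and prove that it is a density of $u(x,t)$ via the distribution-function argument. The structural novelty, compared with Theorems \ref{teor1}--\ref{teor2}, is that in the diagonal truncation $u_{N,N}$ the Fourier index and the Karhunen-Lo\`{e}ve index move simultaneously; in particular the Jacobian factor $1/|T_N(\phi_1)(x,t,\alpha^2)|$ coming from Lemma \ref{lema_abscont} depends on $N$, so the term carrying $u$ will no longer cancel in the difference $f_{u_{N,N}}-f_{u_{M,M}}$ as it did before. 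This is precisely why the conclusion is stated for bounded $K$ rather than for all of $\mathbb{R}$.

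For the Cauchy estimate I would fix $N>M\ge 3$ and, exactly as in Theorems \ref{teor1}--\ref{teor2}, rewrite $f_{u_{M,M}(x,t)}$ as a marginal over the common variables $(\xi_2,\ldots,\xi_{N-1},\alpha^2)$ by applying Lemma \ref{lema_abscont} to the map sending $\xi_1$ to $u_{M,M}$ and leaving $\xi_2,\ldots,\xi_{N-1},\alpha^2$ fixed, so that both densities are integrals of the same form (\ref{fnn}). Since $\alpha^2(\omega)\in\mathcal{D}=\cap_N\mathcal{D}_N$ a.e., the integration may be carried out over $\mathbb{R}^{N-2}\times\mathcal{D}$ for every index, and the lower bound $|T_N(\phi_1)(x,t,\alpha^2)|\ge C(x,t)>0$ keeps all Jacobian factors bounded by $1/C(x,t)$. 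Using the independence of $\alpha^2$, $\xi_1$ and $(\xi_2,\ldots,\xi_{N-1})$, the joint density factors and $f_{\xi_1}$ appears evaluated at
\[ \Phi_K:=\frac{\frac{u}{2}-T_K(\mu_\phi)-\sum_{m=2}^{K-1}T_K(\phi_m)\sqrt{\nu_m}\,\xi_m}{\sqrt{\nu_1}\,T_K(\phi_1)}=:\frac{P_K}{Q_K},\qquad K\in\{M,N\}. \]
I would then split the integrand difference into two pieces: (A) the part where only the argument of $f_{\xi_1}$ changes, estimated by the Lipschitz constant $L$ of $f_{\xi_1}$ together with $|T_N(\phi_1)|\ge C(x,t)$; and (B) the part where only the prefactor $1/|T_\cdot(\phi_1)|$ changes, estimated by the boundedness of $f_{\xi_1}$ (a Lipschitz density on $\mathbb{R}$ is bounded) and again by $C(x,t)$.

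The heart of the argument is to show that both pieces tend to $0$ uniformly for $u\in K$. For piece (B), the bound reduces to $\sum_{n=M+1}^N\|\e^{-n^2\pi^2\alpha^2t}\|_{\leb^1(\Omega)}$, a tail of the convergent series of the hypothesis, carrying no dependence on $u$. For piece (A) the task is to prove $\mathbb{E}[|\Phi_N-\Phi_M|]\to0$ uniformly on $K$; writing $\Phi_N-\Phi_M=(P_N-P_M)/Q_N+P_M(Q_M-Q_N)/(Q_NQ_M)$ and applying the Cauchy--Schwarz inequality, the problem splits into controlling the $\leb^2(\Omega)$-sizes of $P_N-P_M$, of $Q_M-Q_N$, and of $P_M$. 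Here the two summability ingredients enter: the hypothesis $\sum_n\|\e^{-n^2\pi^2\alpha^2t}\|_{\leb^2(\Omega)}<\infty$ controls the Fourier tails $T_N(\cdot)-T_M(\cdot)$, while the trace identity $\sum_m\nu_m=\int_0^1\Cov[\phi(y),\phi(y)]\,\dif y<\infty$ together with $\|\phi_m\|_{\leb^2(0,1)}=1$ and the unit-variance, pairwise-uncorrelatedness of the $\xi_m$ (from Lemma \ref{KLlemma}) controls the Karhunen-Lo\`{e}ve sums, yielding a uniform bound on $\mathbb{E}[P_M^2]$ and tail-smallness for the $\xi$-sum differences. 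The $\tfrac{u}{2}$ term sits inside $P_M$, and it is exactly the factor $\mathbb{E}[P_M^2]^{1/2}\le \tfrac{|u|}{2}+\mathrm{const}$ that forces $u$ to range only over a bounded set $K$; this is the main obstacle and the reason the cancellation available in Theorems \ref{teor1}--\ref{teor2} is lost.

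Once $\{f_{u_{N,N}(x,t)}\}$ is Cauchy in $\leb^\infty(K)$ for every bounded $K$, the limit $g_{x,t}$ is well defined, belongs to $\leb^1(\mathbb{R})$ with $\int_{\mathbb{R}}g_{x,t}\le1$ by Fatou's Lemma, and the identification $g_{x,t}=f_{u(x,t)}$ proceeds verbatim as at the end of the proof of Theorem \ref{teor1}: from $F_{u_{N,N}(x,t)}(u)=F_{u_{N,N}(x,t)}(u_0)+\int_{u_0}^u f_{u_{N,N}(x,t)}$, I would pass to the limit using the uniform convergence on the compact interval $[u_0,u]$, obtaining the analogous identity for $g_{x,t}$ at continuity points of $F_{u(x,t)}$, and then extend to all $u_0,u$ by right-continuity and the countability of the discontinuities. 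The only extra point to record is that $u_{N,N}(x,t)\to u(x,t)$ in law, which follows from $\leb^2(\Omega)$-convergence (hence convergence in probability); this last convergence is proved by the very same tail estimates as above, now taken with $M=\infty$, comparing $u_{N,N}=2T_N(\mu_\phi)+2\sum_{m=1}^{N-1}T_N(\phi_m)\sqrt{\nu_m}\,\xi_m$ with the corresponding expansion of the exact solution (\ref{sol2}).
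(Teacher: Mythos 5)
Your proposal is correct and follows essentially the same route as the paper's proof: the same splitting of $|f_{u_{N,N}}-f_{u_{M,M}}|$ into a prefactor-difference piece and an argument-difference piece, the same use of the Lipschitz property of $f_{\xi_1}$, the lower bound on $|T_N(\phi_1)|$, independence plus Cauchy--Schwarz, the summability of $\sum_n\|\e^{-n^2\pi^2\alpha^2 t}\|_{\leb^2(\Omega)}$, and Karhunen-Lo\`{e}ve tail smallness, followed by the identification of the limit through convergence in law. Your two small deviations --- bounding $f_{\xi_1}$ by its supremum in the prefactor piece (the paper instead uses $f_{\xi_1}(z)\leq L|z|+f_{\xi_1}(0)$, which keeps a $|u|$-factor there), and proving $\leb^2(\Omega)$ rather than $\leb^1(\Omega)$ convergence of $u_{N,N}(x,t)$ --- are harmless reorganizations of the same argument.
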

\begin{proof}
The hypothesis $\alpha^2(\omega)\in \mathcal{D}$ allows us to have the same domain of integration in (\ref{fnn}) for all $N$:
\footnotesize
\begin{align*}
{} & f_{u_{N,N}(x,t)}(u) \\
= & \int_{\mathbb{R}^{N-2}\times\mathcal{D}}f_{(\xi_1,\ldots,\xi_{N-1},\alpha^2)}\bigg(\frac{\frac{u}{2}-T_N(\mu_{\phi})(x,t,\alpha^2)-\sum_{m=2}^{N-1}T_N(\phi_m)(x,t,\alpha^2)\sqrt{\nu_m}\,\xi_m}{\sqrt{\nu_1}\,T_N(\phi_1)(x,t,\alpha^2)},\xi_2,\ldots,\xi_{N-1},\alpha^2 \bigg) \\
\cdot & \frac{1}{2\sqrt{\nu_1}\,|T_N(\phi_1)(x,t,\alpha^2)|}\,\dif \xi_2\cdots \dif \xi_{N-1}\,\dif \alpha^2. 
\end{align*}
\normalsize

Let us check that $\{f_{u_{N,N}(x,t)}\}_{n=1}^\infty$ is Cauchy in $\leb^\infty(K)$ for every bounded set $K\subseteq\mathbb{R}$, for $0<x<1$ and $t>0$. Fix two indexes $N>M$. Applying the independence between $\alpha^2$, $\xi_1$ and $(\xi_2,\ldots,\xi_{N-1})$, one gets
\footnotesize
\begin{align*}
{} & |f_{u_{N,N}(x,t)}(u)-f_{u_{M,M}(x,t)}(u)| \\
\leq & \int_{\mathbb{R}^{N-2}\times \mathcal{D}}\bigg\{\,\bigg|f_{\xi_1}\bigg(\frac{\frac{u}{2}-T_N(\mu_{\phi})(x,t,\alpha^2)-\sum_{m=2}^{N-1}T_N(\phi_m)(x,t,\alpha^2)\sqrt{\nu_m}\,\xi_m}{\sqrt{\nu_1}\,T_N(\phi_1)(x,t,\alpha^2)}\bigg)\frac{1}{2\sqrt{\nu_1}\,|T_N(\phi_1)(x,t,\alpha^2)|} \\
- & f_{\xi_1}\bigg(\frac{\frac{u}{2}-T_M(\mu_{\phi})(x,t,\alpha^2)-\sum_{m=2}^{M-1}T_M(\phi_m)(x,t,\alpha^2)\sqrt{\nu_m}\,\xi_m}{\sqrt{\nu_1}\,T_M(\phi_1)(x,t,\alpha^2)}\bigg)\frac{1}{2\sqrt{\nu_1}\,|T_M(\phi_1)(x,t,\alpha^2)|}\bigg| \\
\cdot & f_{(\xi_2,\ldots,\xi_{N-1})}(\xi_2,\ldots,\xi_{N-1})f_{\alpha^2}(\alpha^2)\,\bigg\}\,\dif \xi_2\cdots \dif \xi_{N-1}\,\dif \alpha^2 \\
\leq & \int_{\mathbb{R}^{N-2}\times \mathcal{D}} \bigg\{\,f_{\xi_1}\bigg(\frac{\frac{u}{2}-T_N(\mu_{\phi})(x,t,\alpha^2)-\sum_{m=2}^{N-1}T_N(\phi_m)(x,t,\alpha^2)\sqrt{\nu_m}\,\xi_m}{\sqrt{\nu_1}\,T_N(\phi_1)(x,t,\alpha^2)}\bigg) \\
\cdot & \bigg|\frac{1}{2\sqrt{\nu_1}\,|T_N(\phi_1)(x,t,\alpha^2)|}-\frac{1}{2\sqrt{\nu_1}\,|T_M(\phi_1)(x,t,\alpha^2)|}\bigg|  \\
\cdot & f_{(\xi_2,\ldots,\xi_{N-1})}(\xi_2,\ldots,\xi_{N-1})f_{\alpha^2}(\alpha^2)\,\bigg\}\,\dif \xi_2\cdots \dif \xi_{N-1}\,\dif \alpha^2  \\
+ & \int_{\mathbb{R}^{N-2}\times \mathcal{D}} \bigg\{\,\frac{1}{2\sqrt{\nu_1}\,|T_M(\phi_1)(x,t,\alpha^2)|}  \\
\cdot & \bigg| f_{\xi_1}\bigg(\frac{\frac{u}{2}-T_N(\mu_{\phi})(x,t,\alpha^2)-\sum_{m=2}^{N-1}T_N(\phi_m)(x,t,\alpha^2)\sqrt{\nu_m}\,\xi_m}{\sqrt{\nu_1}\,T_N(\phi_1)(x,t,\alpha^2)}\bigg) \\
- & f_{\xi_1}\bigg(\frac{\frac{u}{2}-T_M(\mu_{\phi})(x,t,\alpha^2)-\sum_{m=2}^{M-1}T_M(\phi_m)(x,t,\alpha^2)\sqrt{\nu_m}\,\xi_m}{\sqrt{\nu_1}\,T_M(\phi_1)(x,t,\alpha^2)}\bigg) \bigg| \\
\cdot & f_{(\xi_2,\ldots,\xi_{N-1})}(\xi_2,\ldots,\xi_{N-1})f_{\alpha^2}(\alpha^2)\,\bigg\}\,\dif \xi_2\cdots \dif \xi_{N-1}\,\dif \alpha^2 \stackrel{\Delta}{=}(I_1)+(I_2).
\end{align*}
\normalsize

Call $L$ the Lipschitz constant of $f_{\xi_1}$. Denote by $F_{1,0}=f_{\xi_1}(0)$ and by $\phi^{(N)}(x)=\mu_\phi(x)+\sum_{n=1}^N \sqrt{\nu_n}\,\phi_n(x)\,\xi_n$
the $N$-th partial sum of the Karhunen-Lo\`{e}ve expansion (\ref{klphi}).

We carry out four inequalities that will appear when we bound $(I_1)$ and $(I_2)$: for a general $f\in \leb^2(0,1)$,
\small
\begin{equation}
 |T_N(f)(x,t,\alpha^2)|\leq \sum_{n=1}^N|\hat{f}(n)|\e^{-n^2\pi^2\alpha^2 t}\leq \|f\|_{\leb^1(0,1)}\sum_{n=1}^N \e^{-n^2\pi^2\alpha^2 t}, 
\label{1a}
\end{equation}
\begin{equation}
|T_N(f)(x,t,\alpha^2)-T_M(f)(x,t,\alpha^2)|\leq \sum_{n=M+1}^N|\hat{f}(n)|\e^{-n^2\pi^2\alpha^2 t}\leq \|f\|_{\leb^1(0,1)}\sum_{n=M+1}^N \e^{-n^2\pi^2\alpha^2 t}, 
\label{2a}
\end{equation}
\normalsize
\footnotesize
\begin{align}
{} & \left| \sum_{m=2}^{N-1} T_N(\phi_m)(x,t,\alpha^2)\sqrt{\nu_m}\,\xi_m\right|=\left|\sum_{m=2}^{N-1}\sum_{n=1}^N \left(\int_0^1 \phi_m(y)\sin(n\pi y)\,\dif y\right)\e^{-n^2\pi^2\alpha^2 t}\sin(n\pi x)\sqrt{\nu_m}\,\xi_m\right| \nonumber \\
\leq & \sum_{n=1}^N \int_0^1 \left|\sum_{m=2}^{N-1}\phi_m(y)\sqrt{\nu_m}\,\xi_m\right|\,\dif y\,\e^{-n^2\pi^2\alpha^2 t}=\left(\int_0^1 \left|\phi^{(N-1)}(y)-\phi^{(1)}(y)\right|\,\dif y\right)\sum_{n=1}^N \e^{-n^2\pi^2\alpha^2 t} \label{3a}
\end{align}
\normalsize
and
\small
\begin{align}
{} & \left| \sum_{m=2}^{N-1} T_N(\phi_m)(x,t,\alpha^2)\sqrt{\nu_m}\,\xi_m - \sum_{m=2}^{M-1} T_M(\phi_m)(x,t,\alpha^2)\sqrt{\nu_m}\,\xi_m\right| \nonumber \\
\leq & \bigg|\sum_{m=2}^{N-1}\sum_{n=1}^N \left(\int_0^1 \phi_m(y)\sin(n\pi y)\,\dif y\right)\e^{-n^2\pi^2\alpha^2 t}\sin(n\pi x)\sqrt{\nu_m}\,\xi_m \nonumber \\
- & \sum_{m=2}^{M-1}\sum_{n=1}^M \left(\int_0^1 \phi_m(y)\sin(n\pi y)\,\dif y\right)\e^{-n^2\pi^2\alpha^2 t}\sin(n\pi x)\sqrt{\nu_m}\,\xi_m\bigg| \;\;(\text{add and subtract }\sum_{m=2}^{N-1}\sum_{n=1}^M) \nonumber \\
\leq & \left|\sum_{m=2}^{N-1}\sum_{n=M+1}^N \left(\int_0^1 \phi_m(y)\sin(n\pi y)\,\dif y\right)\e^{-n^2\pi^2\alpha^2 t}\sin(n\pi x)\sqrt{\nu_m}\,\xi_m\right| \nonumber \\
+ & \left|\sum_{n=1}^M \sum_{m=M}^{N-1} \left(\int_0^1 \phi_m(y)\sin(n\pi y)\,\dif y\right)\e^{-n^2\pi^2\alpha^2 t}\sin(n\pi x)\sqrt{\nu_m}\,\xi_m\right| \nonumber \\
\leq & \sum_{n=M+1}^N \int_0^1 \left|\sum_{m=2}^{N-1}\phi_m(y)\sqrt{\nu_m}\,\xi_m\right|\,\dif y\,\e^{-n^2\pi^2\alpha^2 t} \nonumber \\
+ & \sum_{n=1}^M \int_0^1 \left|\sum_{m=M}^{N-1}\phi_m(y)\sqrt{\nu_m}\,\xi_m\right|\,\dif y\,\e^{-n^2\pi^2\alpha^2 t} \nonumber \\
= & \left(\int_0^1 \left|\phi^{(N-1)}(y)-\phi^{(1)}(y)\right|\,\dif y\right)\sum_{n=M+1}^N \e^{-n^2\pi^2\alpha^2 t} \nonumber \\
+ & \left(\int_0^1 \left|\phi^{(N-1)}(y)-\phi^{(M-1)}(y)\right|\,\dif y\right)\sum_{n=1}^M \e^{-n^2\pi^2\alpha^2 t}. \label{4a}
\end{align}
\normalsize

From now on in this proof, $C$ will denote any constant whose value depends on $x$, $t$ and $\phi$, and it does not depend on $N$, $M$ and $u$. The reason is that the expressions to deal with will become large and we do not want the notation to be cumbersome.

Let us bound $(I_1)$. First we apply the Lipschitz condition of $f_{\xi_1}$ and bounds (\ref{1a}) and (\ref{3a}):
\small
\begin{align*}
{} & f_{\xi_1}\bigg(\frac{\frac{u}{2}-T_N(\mu_{\phi})(x,t,\alpha^2)-\sum_{m=2}^{N-1}T_N(\phi_m)(x,t,\alpha^2)\sqrt{\nu_m}\,\xi_m}{\sqrt{\nu_1}\,T_N(\phi_1)(x,t,\alpha^2)}\bigg) \\
\leq & L\bigg|\frac{\frac{u}{2}-T_N(\mu_{\phi})(x,t,\alpha^2)-\sum_{m=2}^{N-1}T_N(\phi_m)(x,t,\alpha^2)\sqrt{\nu_m}\,\xi_m}{\sqrt{\nu_1}\,T_N(\phi_1)(x,t,\alpha^2)}\bigg|+F_{1,0} \\
\leq & \frac{L}{\sqrt{\nu_1}\,C(x,t)}\left(\frac{|u|}{2}+|T_N(\mu_{\phi})(x,t,\alpha^2)|+\left|\sum_{m=2}^{N-1}T_N(\phi_m)(x,t,\alpha^2)\sqrt{\nu_m}\,\xi_m\right|\right)+F_{1,0} \\
\leq & C\left(|u|+\sum_{n=1}^N \e^{-n^2\pi^2 \alpha^2 t}+\left(\int_0^1|\phi^{(N-1)}(y)-\phi^{(1)}(y)|\,\dif y\right)\sum_{n=1}^N \e^{-n^2\pi^2 \alpha^2 t}+1\right).
\end{align*}
\normalsize
Using bound (\ref{2a}),
\small
\begin{align*}
{} & \bigg|\frac{1}{2\sqrt{\nu_1}\,|T_N(\phi_1)(x,t,\alpha^2)|}-\frac{1}{2\sqrt{\nu_1}\,|T_M(\phi_1)(x,t,\alpha^2)|}\bigg| \\
= & \frac{\big|\,|T_N(\phi_1)(x,t,\alpha^2)|-|T_M(\phi_1)(x,t,\alpha^2)|\,\big|}{2\sqrt{\nu_1}\,|T_N(\phi_1)(x,t,\alpha^2)|\,|T_M(\phi_1)(x,t,\alpha^2)|} \leq\frac{|T_N(\phi_1)(x,t,\alpha^2)-T_M(\phi_1)(x,t,\alpha^2)|}{2\sqrt{\nu_1}\,|T_N(\phi_1)(x,t,\alpha^2)|\,|T_M(\phi_1)(x,t,\alpha^2)|} \\
\leq & C|T_N(\phi_1)(x,t,\alpha^2)-T_M(\phi_1)(x,t,\alpha^2)|\leq C\sum_{n=M+1}^N \e^{-n^2\pi^2\alpha^2 t}.
\end{align*}
\normalsize
This implies
\small
\begin{align*}
{} & (I_1) \\
\leq & C\,\mathbb{E}\bigg[\left(|u|+\sum_{n=1}^N \e^{-n^2\pi^2 \alpha^2 t}+\left(\int_0^1|\phi^{(N-1)}(y)-\phi^{(1)}(y)|\,\dif y\right)\sum_{n=1}^N \e^{-n^2\pi^2 \alpha^2 t}+1\right) \\
\cdot & \left(\sum_{n=M+1}^N \e^{-n^2\pi^2\alpha^2 t}\right)\bigg]\;\;(\text{expand, use linearity of }\mathbb{E}\text{ and independence of }\phi^{(j)}\text{ and }\alpha^2,\,j\geq1) \\
= & C\bigg\{(|u|+1)\sum_{n=M+1}^N \mathbb{E}[\e^{-n^2\pi^2\alpha^2 t}]+\sum_{n=1}^N\sum_{m=M+1}^N \mathbb{E}[\e^{-n^2\pi^2\alpha^2 t}\e^{-m^2\pi^2\alpha^2 t}] \\
+ & \|\phi^{(N-1)}-\phi^{(1)}\|_{\leb^1([0,1]\times\Omega)}\sum_{n=1}^N\sum_{m=M+1}^N \mathbb{E}[\e^{-n^2\pi^2\alpha^2 t}\e^{-m^2\pi^2\alpha^2 t}]\bigg\}\;\;(\text{use Cauchy-Schwarz}) \\
\leq & C\bigg\{(|u|+1)\sum_{n=M+1}^N \|\e^{-n^2\pi^2\alpha^2 t}\|_{\leb^2(\Omega)}+\sum_{n=1}^N\sum_{m=M+1}^N \|\e^{-n^2\pi^2\alpha^2 t}\|_{\leb^2(\Omega)}\|\e^{-m^2\pi^2\alpha^2 t}\|_{\leb^2(\Omega)} \\
+ & \|\phi^{(N-1)}-\phi^{(1)}\|_{\leb^2([0,1]\times\Omega)}\sum_{n=1}^N\sum_{m=M+1}^N \|\e^{-n^2\pi^2\alpha^2 t}\|_{\leb^2(\Omega)}\|\e^{-m^2\pi^2\alpha^2 t}\|_{\leb^2(\Omega)}\bigg\}\;\;(\text{group terms}) \\
= & C\bigg\{(|u|+1)\sum_{n=M+1}^N\|\e^{-n^2\pi^2\alpha^2 t}\|_{\leb^2(\Omega)}+\left(\sum_{n=1}^N\|\e^{-n^2\pi^2\alpha^2 t}\|_{\leb^2(\Omega)}\right)\left(\sum_{n=M+1}^N\|\e^{-n^2\pi^2\alpha^2 t}\|_{\leb^2(\Omega)}\right) \\
\cdot & \left(1+\|\phi^{(N-1)}-\phi^{(1)}\|_{\leb^2([0,1]\times\Omega)}\right)\bigg\}.
\end{align*}
\normalsize

Let us bound $(I_2)$. First,
\[ \frac{1}{2\sqrt{\nu_1}\,|T_M(\phi_1)(x,t,\alpha^2)|}\leq C. \]
Now, using the Lipschitz condition of $f_{\xi_1}$ and inequality (\ref{4a}),
\small
\begin{align*}
{} & \bigg| f_{\xi_1}\bigg(\frac{\frac{u}{2}-T_N(\mu_{\phi})(x,t,\alpha^2)-\sum_{m=2}^{N-1}T_N(\phi_m)(x,t,\alpha^2)\sqrt{\nu_m}\,\xi_m}{\sqrt{\nu_1}\,T_N(\phi_1)(x,t,\alpha^2)}\bigg) \\
- & f_{\xi_1}\bigg(\frac{\frac{u}{2}-T_M(\mu_{\phi})(x,t,\alpha^2)-\sum_{m=2}^{M-1}T_M(\phi_m)(x,t,\alpha^2)\sqrt{\nu_m}\,\xi_m}{\sqrt{\nu_1}\,T_M(\phi_1)(x,t,\alpha^2)}\bigg) \bigg| \\
\leq & L\,\bigg|\frac{\frac{u}{2}-T_N(\mu_{\phi})(x,t,\alpha^2)-\sum_{m=2}^{N-1}T_N(\phi_m)(x,t,\alpha^2)\sqrt{\nu_m}\,\xi_m}{\sqrt{\nu_1}\,T_N(\phi_1)(x,t,\alpha^2)} \\
- & \frac{\frac{u}{2}-T_M(\mu_{\phi})(x,t,\alpha^2)- \sum_{m=2}^{M-1}T_M(\phi_m)(x,t,\alpha^2)\sqrt{\nu_m}\,\xi_m}{\sqrt{\nu_1}\,T_M(\phi_1)(x,t,\alpha^2)}\bigg| \\
\leq & L\bigg\{ \bigg|\frac{1}{\sqrt{\nu_1}\,T_N(\phi_1)(x,t,\alpha^2)}-\frac{1}{\sqrt{\nu_1}\,T_M(\phi_1)(x,t,\alpha^2)}\bigg| \\
\cdot & \bigg|\frac{u}{2}-T_N(\mu_{\phi})(x,t,\alpha^2)-\sum_{m=2}^{N-1}T_N(\phi_m)(x,t,\alpha^2)\sqrt{\nu_m}\,\xi_m\bigg| \\
+ & \frac{1}{\sqrt{\nu_1}\,|T_M(\phi_1)(x,t,\alpha^2)|}\bigg(|T_N(\mu_{\phi})(x,t,\alpha^2)-T_M(\mu_{\phi})(x,t,\alpha^2)| \\
+ & \bigg|\sum_{m=2}^{N-1}T_N(\phi_m)(x,t,\alpha^2)\sqrt{\nu_m}\,\xi_m-\sum_{m=2}^{M-1}T_M(\phi_m)(x,t,\alpha^2)\sqrt{\nu_m}\,\xi_m\bigg|\bigg)\bigg\} \\
\leq & C\bigg\{ \left(\sum_{n=M+1}^N \e^{-n^2\pi^2\alpha^2 t}\right)\left(|u|+\sum_{n=1}^N \e^{-n^2\pi^2\alpha^2 t}+\left(\int_0^1 |\phi^{(N-1)}(y)-\phi^{(1)}(y)|\,\dif y\right)\sum_{n=1}^N \e^{-n^2\pi^2\alpha^2 t}\right) \\
+ & \sum_{n=M+1}^N \e^{-n^2\pi^2\alpha^2 t}+\left(\int_0^1 |\phi^{(N-1)}(y)-\phi^{(1)}(y)|\,\dif y\right)\sum_{n=M+1}^N \e^{-n^2\pi^2\alpha^2 t} \\
+ & \left(\int_0^1 |\phi^{(N-1)}(y)-\phi^{(M-1)}(y)|\,\dif y\right)\sum_{n=1}^M \e^{-n^2\pi^2\alpha^2 t}\bigg\}.
\end{align*}
\normalsize
These inequalities give
\footnotesize
\begin{align*}
{} & (I_2) \\
\leq & C\,\mathbb{E}\bigg[ \left(\sum_{n=M+1}^N \e^{-n^2\pi^2\alpha^2 t}\right)\left(|u|+\sum_{n=1}^N \e^{-n^2\pi^2\alpha^2 t}+\left(\int_0^1 |\phi^{(N-1)}(y)-\phi^{(1)}(y)|\,\dif y\right)\sum_{n=1}^N \e^{-n^2\pi^2\alpha^2 t}\right) \\
+ & \sum_{n=M+1}^N \e^{-n^2\pi^2\alpha^2 t}+\left(\int_0^1 |\phi^{(N-1)}(y)-\phi^{(1)}(y)|\,\dif y\right)\sum_{n=M+1}^N \e^{-n^2\pi^2\alpha^2 t} \\
+ & \left(\int_0^1 |\phi^{(N-1)}(y)-\phi^{(M-1)}(y)|\,\dif y\right)\sum_{n=1}^M \e^{-n^2\pi^2\alpha^2 t}\bigg] \\
& (\text{expand, use linearity of }\mathbb{E}\text{ and independence of }\phi^{(j)}\text{ and }\alpha^2,\,j\geq1) \\
= & C\bigg\{ |u|\sum_{n=M+1}^N \mathbb{E}[\e^{-n^2\pi^2\alpha^2 t}]+\sum_{n=1}^M \sum_{m=M+1}^N \mathbb{E}[\e^{-n^2\pi^2\alpha^2 t}\e^{-m^2\pi^2\alpha^2 t}] \\
+ & \|\phi^{(N-1)}-\phi^{(1)}\|_{\leb^1([0,1]\times\Omega)}\sum_{n=1}^M \sum_{m=M+1}^N \mathbb{E}[\e^{-n^2\pi^2\alpha^2 t}\e^{-m^2\pi^2\alpha^2 t}]+\sum_{n=M+1}^N \mathbb{E}[\e^{-n^2\pi^2\alpha^2 t}] \\
+ & \|\phi^{(N-1)}-\phi^{(1)}\|_{\leb^1([0,1]\times\Omega)}\sum_{n=M+1}^N \mathbb{E}[\e^{-n^2\pi^2\alpha^2 t}]+ \|\phi^{(N-1)}-\phi^{(M-1)}\|_{\leb^1([0,1]\times\Omega)}\sum_{n=1}^M \mathbb{E}[\e^{-n^2\pi^2\alpha^2 t}]\bigg\} \\
& (\text{use Cauchy-Schwarz}) \\ 
\leq & C\bigg\{ |u|\sum_{n=M+1}^N \|\e^{-n^2\pi^2\alpha^2 t}\|_{\leb^2(\Omega)}+\sum_{n=1}^M \sum_{m=M+1}^N \|\e^{-n^2\pi^2\alpha^2 t}\|_{\leb^2(\Omega)}\|\e^{-m^2\pi^2\alpha^2 t}\|_{\leb^2(\Omega)} \\
+ & \|\phi^{(N-1)}-\phi^{(1)}\|_{\leb^2([0,1]\times\Omega)}\sum_{n=1}^M \sum_{m=M+1}^N \|\e^{-n^2\pi^2\alpha^2 t}\|_{\leb^2(\Omega)}\|\e^{-m^2\pi^2\alpha^2 t}\|_{\leb^2(\Omega)}+\sum_{n=M+1}^N \|\e^{-n^2\pi^2\alpha^2 t}\|_{\leb^2(\Omega)} \\
+ & \|\phi^{(N-1)}-\phi^{(1)}\|_{\leb^2([0,1]\times\Omega)}\sum_{n=M+1}^N \|\e^{-n^2\pi^2\alpha^2 t}\|_{\leb^2(\Omega)}+ \|\phi^{(N-1)}-\phi^{(M-1)}\|_{\leb^2([0,1]\times\Omega)}\sum_{n=1}^M \|\e^{-n^2\pi^2\alpha^2 t}\|_{\leb^2(\Omega)}\bigg\} \\
& (\text{group terms}) \\
\leq & C\bigg\{ (|u|+1)\sum_{n=M+1}^N \| \e^{-n^2\pi^2\alpha^2 t}\|_{\leb^2(\Omega)} \\
+ & \left(\sum_{n=1}^N \| \e^{-n^2\pi^2\alpha^2 t}\|_{\leb^2(\Omega)}\right)\left(\sum_{n=M+1}^N \| \e^{-n^2\pi^2\alpha^2 t}\|_{\leb^2(\Omega)}\right)\left(\|\phi^{(N-1)}-\phi^{(1)}\|_{\leb^2([0,1]\times\Omega)}+1\right) \\
+ & \|\phi^{(N-1)}-\phi^{(1)}\|_{\leb^2([0,1]\times\Omega)}\sum_{n=M+1}^N \| \e^{-n^2\pi^2\alpha^2 t}\|_{\leb^2(\Omega)}+\|\phi^{(N-1)}-\phi^{(M-1)}\|_{\leb^2([0,1]\times\Omega)}\sum_{n=1}^M \| \e^{-n^2\pi^2\alpha^2 t}\|_{\leb^2(\Omega)} \bigg\}.
\end{align*}

\normalsize

From the hypotheses $\sum_{n=1}^\infty \|\e^{-n^2\pi^2\alpha^2 t}\|_{\leb^2(\Omega)}<\infty$ and $\phi^{(N)}\stackrel{N\rightarrow\infty}{\longrightarrow}\phi$ in $\leb^2([0,1]\times\Omega)$, we arrive at the desired result: $\{f_{u_{N,N}(x,t)}\}_{n=1}^\infty$ is Cauchy in $\leb^\infty(K)$ for every bounded set $K\subseteq\mathbb{R}$, for $0<x<1$ and $t>0$.

Let $g_{x,t}(u)=\lim_{N\rightarrow\infty} f_{u_{N,N}(x,t)}(u)$, $u\in\mathbb{R}$. We need to check that $g_{x,t}$ is a density $f_{u(x,t)}$ of the random variable $u(x,t)(\omega)$ given in (\ref{sol2}), for $0<x<1$ and $t>0$. As we did in the end of the proof of Theorem \ref{teor1}, it suffices to check that $u_{N,N}(x,t)$ converges in law to $u(x,t)$, so that $\lim_{N\rightarrow\infty} F_{u_{N,N}(x,t)}(u)=F_{u(x,t)}(u)$ for all $u\in\mathbb{R}$ being a point of continuity of $F_{u(x,t)}$. As we saw in the end of the proof of Theorem \ref{teor1}, this would imply that $g_{x,t}$ is a density $f_{u(x,t)}$ of the random variable $u(x,t)(\omega)$.

We show that $u_{N,N}(x,t)\rightarrow u(x,t)$ in $\leb^1(\Omega)$ as $N\rightarrow\infty$, for $0<x<1$ and $t>0$. This will imply the desired convergence in law.

Write 
\begin{align*}
 u(x,t)(\omega)= {} & 2\sum_{n=1}^\infty \hat{\mu}_\phi(n)\e^{-n^2\pi^2\alpha^2(\omega)t}\sin(n\pi x) \\
+ & 2\sum_{n=1}^\infty \int_0^1 \left(\sum_{m=1}^\infty\sqrt{\nu_m}\,\phi_m(y)\xi_m(\omega)\right)\sin(n\pi y)\,\dif y\,\e^{-n^2\pi^2\alpha^2(\omega) t}\sin(n\pi x) \\
\stackrel{\Delta}{=} & 2\cdot \mathrm{(A1)}+2\cdot \mathrm{(A2)}, 
\end{align*}
where the sum $\sum_{m=1}^\infty$ is in the topology of $\leb^2([0,1]\times\Omega)$ and both sums $\sum_{n=1}^\infty$ are understood pointwise. Write
\small
\begin{align*}
 u_{N,N}(x,t)(\omega)= {} & 2\sum_{n=1}^N \hat{\mu}_\phi(n)\e^{-n^2\pi^2\alpha^2(\omega)t}\sin(n\pi x) \\
+ & 2\sum_{n=1}^N \int_0^1 \left(\sum_{m=1}^{N-1}\sqrt{\nu_m}\,\phi_m(y)\xi_m(\omega)\right)\sin(n\pi y)\,\dif y\,\e^{-n^2\pi^2\alpha^2(\omega) t}\sin(n\pi x) \\
\stackrel{\Delta}{=} & 2\cdot \mathrm{(A3)}+2\cdot \mathrm{(A4)}. 
\end{align*}
\normalsize
Let us perform some estimates:
\begin{align*}
 \mathbb{E}[|\mathrm{(A1)}-\mathrm{(A3)}|]\leq & \|\mu_\phi\|_{\leb^1(0,1)}\sum_{n=N+1}^\infty \mathbb{E}[\e^{-n^2\pi^2\alpha^2 t}] \\
\leq & \|\mu_\phi\|_{\leb^1(0,1)}\sum_{n=N+1}^\infty \|\e^{-n^2\pi^2\alpha^2 t}\|_{\leb^2(\Omega)} \stackrel{N\rightarrow\infty}{\longrightarrow} 0 
\end{align*}
(by Cauchy-Schwarz and the hypothesis $\sum_{n=1}^\infty \|\e^{-n^2\pi^2\alpha^2 t}\|_{\leb^2(\Omega)}<\infty$) and
\begin{align*}
{} & \mathbb{E}[|\mathrm{(A2)}-\mathrm{(A4)}|]\,\;(\text{add and subtract }\sum_{n=1}^N\sum_{m=1}^\infty) \\
\leq & \mathbb{E}\left[\left|\sum_{n=N+1}^\infty\int_0^1 (\phi(y)-\mu_\phi(y))\sin(n\pi y)\,\dif y\,\e^{-n^2\pi^2\alpha^2 t}\sin(n\pi x)\right|\right] \\
+ & \mathbb{E}\left[\left|\sum_{n=1}^N\int_0^1 (\phi(y)-\phi^{(N-1)}(y))\sin(n\pi y)\,\dif y\,\e^{-n^2\pi^2\alpha^2 t}\sin(n\pi x)\right|\right] \\
\leq & \|\phi-\mu_\phi\|_{\leb^2([0,1]\times\Omega)}\sum_{n=N+1}^\infty \|\e^{-n^2\pi^2\alpha^2 t}\|_{\leb^2(\Omega)} \\
+ & \|\phi-\phi^{(N-1)}\|_{\leb^2([0,1]\times\Omega)}\sum_{n=1}^N \|\e^{-n^2\pi^2\alpha^2 t}\|_{\leb^2(\Omega)}\stackrel{N\rightarrow\infty}{\longrightarrow} 0.
\end{align*}
This proves that $u_{N,N}(x,t)\rightarrow u(x,t)$ as $N\rightarrow\infty$ in $\leb^1(\Omega)$, for $0<x<1$ and $t>0$, and we are done.
\end{proof}

To conclude, we make some comments on the hypotheses of Theorem \ref{teor3}. 

\begin{remark}
If $\phi$ is a Gaussian process, then $\xi_1,\xi_2,\ldots$ are independent and Gaussian. Thus, $f_{\xi_1}$ is Lipschitz on $\mathbb{R}$. On the other hand, if $\alpha^2(\omega)\geq a>0$ for a.e. $\omega \in \Omega$, then the hypothesis $\sum_{n=1}^{\infty} \|\e^{-n^2\pi^2\alpha^2 t}\|_{\leb^2(\Omega)}<\infty$ holds.
\end{remark}

\begin{remark}
The hypothesis $|T_N(\phi_1)(x,t,\alpha^2(\omega))|\geq C(x,t)>0$ for a.e. $\omega\in\Omega$ and $N$, is very difficult to check in practice. 

For example, if $\phi(x)=W(x)$, where $W$ is a standard Brownian motion on $[0,1]$, then the eigenvalues and eigenvectors associated to its Karhunen-Lo\`{e}ve expansion are 
\[\nu_{j+1}=\frac{1}{\left(j+\frac12\right)^2\pi^2},\quad \phi_{j+1}(t)=\sqrt{2}\sin\left(t\left(j+\frac12\right)\pi\right),\quad j\geq 0.\]
We have
\[\hat{\phi}_1(n)=\int_0^1\phi_1(y)\sin(n\pi y)\,\dif y=\sqrt{2}\int_0^1\sin\left(y\frac{\pi}{2}\right)\sin(n\pi y)\,\dif y=\sqrt{2}\frac{4n(-1)^n}{\pi(1-4n^2)},\]
therefore,
\[T_N(\phi_1)(x,t,\alpha^2(\omega))=\frac{4\sqrt{2}}{\pi}\sum_{n=1}^N\frac{n(-1)^n}{1-4n^2}\e^{-n^2\pi^2\alpha^2(\omega)t}\sin(n\pi x).\]
In principle, for a given $\alpha^2$ it is not possible to ensure directly that $|T_N(\phi_1)(x,t,\alpha^2(\omega))|\geq C(x,t)>0$ for a.e. $\omega\in\Omega$ and $N$ happens. In fact, this hypothesis cannot hold, since as we said in Remark \ref{brownia_no_funciona}, the initial condition $\phi$ cannot be a Brownian motion, because $\phi(1)=u(1,0)=0$.

However, if $\phi(x)=B(x)$, where $B$ is a Brownian bridge on $[0,1]$, it is possible to ensure that the hypothesis holds. Indeed, the eigenvalues and eigenvectors associated to its Karhunen-Lo\`{e}ve expansion are 
\[\nu_j=\frac{1}{\pi^2 j^2}, \quad \phi_j(t)=\sqrt{2}\sin(j\pi t),\quad j\geq1. \]
The key fact is that the orthonormal system of $\leb^2([0,1])$ obtained in the Karhunen-Lo\`{e}ve expansion of the Brownian bridge coincides with the orthonormal system of $\leb^2([0,1])$ obtained in the Sturm-Liouville problem associated to the PDE problem (\ref{edp_determinista}). Concerning computations, this implies that
\[\hat{\phi}_1(n)=\sqrt{2}\int_0^1\sin(\pi y)\sin(n\pi y)\,\dif y=\begin{cases} 0,&\; n\neq1,\\ \frac{\sqrt{2}}{2},&\; n=1.\end{cases}\]
Thus, \[T_N(\phi_1)(x,t,\alpha^2(\omega))=\frac{\sqrt{2}}{2}\e^{-\pi^2 \alpha^2(\omega)t}\sin(\pi x).\]
If $\support (\alpha^2)\subseteq[a,b]\subseteq(0,\infty)$, then 
\[ T_N(\phi_1)(x,t,\alpha^2(\omega))\geq\frac{\sqrt{2}}{2}\e^{-\pi^2 b\,t}\sin(\pi x)=:C(x,t)>0,  \]
for $0<x<1$ and $t>0$. Hence, the hypothesis holds for the Brownian bridge.

As we commented in Remark \ref{bridge_funciona}, it makes sense to model the initial condition by means of a Brownian bridge, since at $x=1$ the process $\phi$ must vanish.
\end{remark}

\section{Examples}

\begin{example} \label{ex_bb} \normalfont
Consider the randomized PDE problem (\ref{edp_determinista}), with $\alpha^2\sim \text{Uniform}(1,2)$ and $\phi(x)=B(x)$ a standard Brownian bridge on $[0,1]$ being independent. Recall that the hypotheses of Theorem \ref{teor2} and Theorem \ref{teor3} are satisfied. We will perform numerical approximations of the probability density function of the solution $u(x,t)(\omega)$ given in (\ref{sol2}). For that purpose, we will use formulas (\ref{fbb1}) and (\ref{fnn}), which give $f_{u_N(x,t)}(u)$ and $f_{u_{N,N}(x,t)}(u)$ respectively.

In Figures \ref{x05t01}, \ref{x07t03} and \ref{x07t1}, we can see the density $f_{u_N(x,t)}(u)$ given in (\ref{fbb1}) for $N=2$ (left) and $N=3$ (right) at the points $(x,t)=(0.5,0.1)$, $(x,t)=(0.7,0.3)$ and $(x,t)=(0.7,1)$, respectively. In Figures \ref{x05t01KL}, \ref{x07t03KL} and \ref{x07t1KL}, we can see the density $f_{u_{N,N}(x,t)}(u)$ given in (\ref{fnn}) for $N=3$ (left) and $N=4$ (right) at the same points as before. In Figure \ref{3D}, three dimensional plots of the density $f_{u_3(x,t)}(u)$ given in (\ref{fbb1}) (left) and of the density $f_{u_{4,4}(x,t)}(u)$ given in (\ref{fnn}) (right) are presented, with $x=0.5$ fixed and $t\in [0.1,0.5]$ varying, to show the time evolution of the density.

In Table \ref{taulaL1}, we compare the two plots in each of the figures in order to assess convergence. In Table \ref{taulaEV}, we simulate the expectation and variance of $u(x,t)(\omega)$ at the previous points.

Notice that, as $t$ increases, the density of $u(x,t)(\omega)$ seems to behave as a Dirac delta function. Indeed, as $A_1,A_2,\ldots$ are independent and $A_n\sim \text{Normal}(0,2/(n^2\pi^2))$ by Remark \ref{bridge_funciona}, we have
\[\mathbb{E}[u(x,t)]=\sum_{n=1}^\infty \mathbb{E}[A_n]  \mathbb{E}[\e^{-n^2\pi^2\alpha^2t}]\sin(n\pi x)=0 \]
since $\mathbb{E}[A_n]=0$ for all $n=1,2,\ldots$ and, taking into account that $\alpha^2(\omega)\geq1$ for a.e. $\omega\in\Omega$,
\begin{align*}
\mathbb{V}[u(x,t)]= {} & \|u(x,t)\|_{\leb^2(\Omega)}^2 \leq \left\|\sum_{n=1}^{\infty} |A_n|\,\e^{-n^2\pi^2t}\right\|_{\leb^2(\Omega)}^2=\sum_{n=1}^\infty \|A_n\|_{\leb^2(\Omega)}^2\,\e^{-2n^2\pi^2 t} \\
= {} & \sum_{n=1}^\infty \frac{2}{n^2\pi^2} \e^{-2n^2\pi^2t} \stackrel{t\rightarrow\infty}{\longrightarrow} 0.
\end{align*}
Therefore, the density tends to be concentrated around zero. 

\bigskip

\begin{table}[H]
\begin{center}
\begin{tabular}{|c|c|c|c|} \hline
$\leb^1$ / $(x,t)$ & $(0.5,0.1)$ & $(0.7,0.3)$ & $(0.7,1)$  \\ \hline
$\|f_{u_2(x,t)}-f_{u_3(x,t)}\|_{\leb^1(\mathbb{R})}$ & $1.65393\cdot 10^{-8}$ & $2.51309\cdot 10^{-7}$ & $0.00734303$ \\ \hline
$\|f_{u_{3,3}(x,t)}-f_{u_{4,4}(x,t)}\|_{\leb^1(\mathbb{R})}$ & $5.60959\cdot 10^{-8}$ & $1.14085\cdot 10^{-7}$ & $0.000148152$ \\ \hline
\end{tabular}
\caption{Comparison of the two plots in each of the figures. Example \ref{ex_bb}.}
\label{taulaL1}
\end{center}
\end{table}

\begin{table}[H]
\begin{center}
\begin{tabular}{|c|c|c|c|} \hline
$\mathbb{E}$, $\mathbb{V}$ / $(x,t)$ & $(0.5,0.1)$ & $(0.7,0.3)$ & $(0.7,1)$  \\ \hline
$\mathbb{E}[u_3(x,t)]$ & $2.14238 \cdot 10^{-18}$ & $3.00816\cdot 10^{-18}$ & $0$ \\ \hline
$\mathbb{V}[u_3(x,t)]$ & $0.0122708$ & $0.0000593315$ & $0$ \\ \hline
$\mathbb{E}[u_{4,4}(x,t)]$ & $-1.58646\cdot 10^{-17}$ & $2.32603\cdot 10^{-18}$ & $0$ \\ \hline
$\mathbb{V}[u_{4,4}(x,t)]$ & $0.0122708$ & $0.0000593237$ & $0$ \\ \hline
\end{tabular}
\caption{Simulation of the expectation and variance. Example \ref{ex_bb}.}
\label{taulaEV}
\end{center}
\end{table} 

\begin{figure}[H]
  \begin{center}
    \includegraphics[width=7cm]{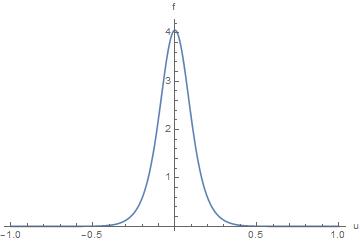}
		\includegraphics[width=7cm]{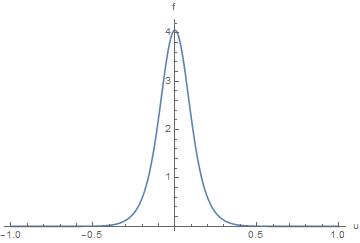}
    \caption{Density (\ref{fbb1}) for $N=2$ (left) and $N=3$ (right) at the point $(x,t)=(0.5,0.1)$. Example \ref{ex_bb}.}
		\label{x05t01}
    \end{center}
  \end{figure}
	
\begin{figure}[H]
  \begin{center}
    \includegraphics[width=7cm]{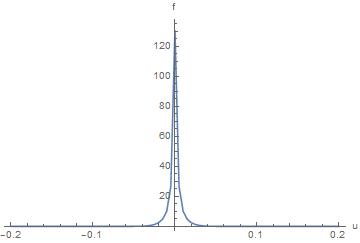}
		\includegraphics[width=7cm]{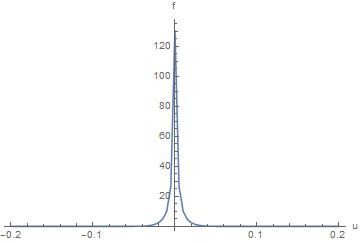}
    \caption{Density (\ref{fbb1}) for $N=2$ (left) and $N=3$ (right) at the point $(x,t)=(0.7,0.3)$. Example \ref{ex_bb}.}
		\label{x07t03}
    \end{center}
  \end{figure}
	
	\begin{figure}[H]
  \begin{center}
    \includegraphics[width=7cm]{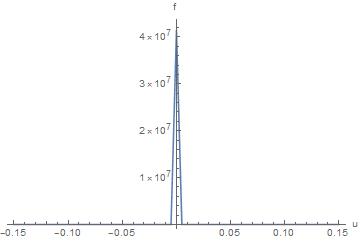}
		\includegraphics[width=7cm]{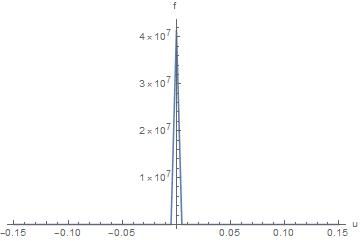}
    \caption{Density (\ref{fbb1}) for $N=2$ (left) and $N=3$ (right) at the point $(x,t)=(0.7,1)$. Example \ref{ex_bb}.}
		\label{x07t1}
    \end{center}
  \end{figure}

	\begin{figure}[H]
  \begin{center}
    \includegraphics[width=7cm]{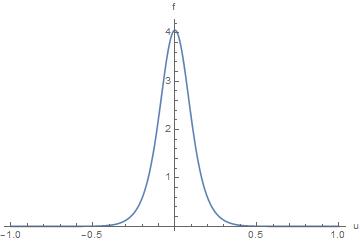}
		\includegraphics[width=7cm]{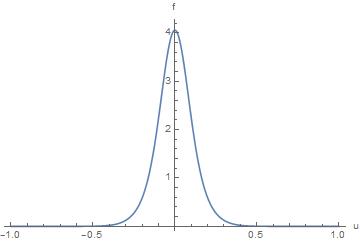}
    \caption{Density (\ref{fnn}) for $N=3$ (left) and $N=4$ (right) at the point $(x,t)=(0.5,0.1)$. Example \ref{ex_bb}.}
		\label{x05t01KL}
    \end{center}
  \end{figure}
	
\begin{figure}[H]
  \begin{center}
    \includegraphics[width=7cm]{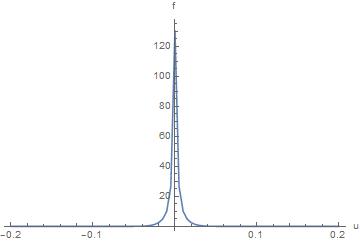}
		\includegraphics[width=7cm]{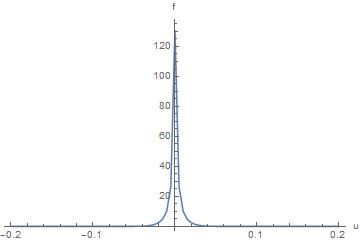}
    \caption{Density (\ref{fnn}) for $N=3$ (left) and $N=4$ (right) at the point $(x,t)=(0.7,0.3)$. Example \ref{ex_bb}.}
		\label{x07t03KL}
    \end{center}
  \end{figure}
	
	\begin{figure}[H]
  \begin{center}
    \includegraphics[width=7cm]{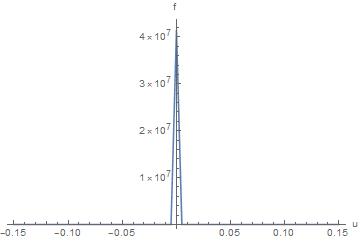}
		\includegraphics[width=7cm]{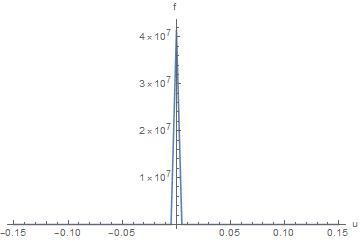}
    \caption{Density (\ref{fnn}) for $N=3$ (left) and $N=4$ (right) at the point $(x,t)=(0.7,1)$. Example \ref{ex_bb}.}
		\label{x07t1KL}
    \end{center}
  \end{figure}
	
	\begin{figure}[H]
  \begin{center}
    \includegraphics[width=7cm]{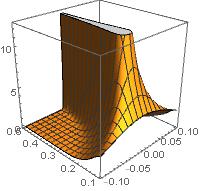}
		\includegraphics[width=7cm]{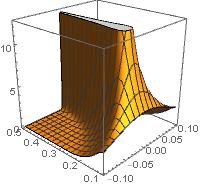}
    \caption{Density (\ref{fbb1}) for $N=3$ at the point $x=0.5$ and $0.1\leq t\leq 0.5$ (left) and density (\ref{fnn}) for $N=4$ at the point $x=0.5$ and $0.1\leq t\leq 0.5$ (right). Example \ref{ex_bb}.}
		\label{3D}
    \end{center}
  \end{figure}

\end{example}
	
\begin{example} \label{ex_nongaussian} \normalfont
We can perform the same analysis for a much larger class of stochastic processes $\phi$. Let $\alpha^2(\omega)$ be, as usual, a random variable such that $\alpha^2(\omega)\geq a>0$ for certain $a\in\mathbb{R}$. Let $\phi$ be a process of the following form:
\begin{equation}
 \phi(x)(\omega)=\sum_{j=1}^\infty \sqrt{\nu_j} \,\sqrt{2}\,\sin(j\pi x)\,\xi_j(\omega), 
\label{phigeneral}
\end{equation}
where the sum is in the topology of $\leb^2([0,1]\times\Omega)$, $\{\nu_j\}_{j=1}^\infty$ are positive real numbers satisfying $\sum_{j=1}^\infty \nu_j<\infty$ and $\{\xi_j\}_{j=1}^\infty$ are absolutely continuous random variables with zero expectation, unit variance and independent. Notice that the sum is well-defined in $\leb^2([0,1]\times\Omega)$, because for two indexes $N>M$ we have, by Pythagoras Theorem in $\leb^2([0,1]\times\Omega)$,
\begin{align*}
 \left\|\sum_{j=M+1}^N \sqrt{\nu_j} \,\sqrt{2}\,\sin(j\pi x)\,\xi_j\right\|_{\leb^2([0,1]\times\Omega)}^2= {} & \sum_{j=M+1}^N \nu_j\, \|\sqrt{2}\,\sin(j\pi x)\|_{\leb^2([0,1])}^2\|\xi_j\|_{\leb^2(\Omega)}^2 \\
= {} & \sum_{j=M+1}^N \nu_j\stackrel{N,M\rightarrow\infty}{\longrightarrow}0. 
\end{align*}

Expression (\ref{phigeneral}) for $\phi$ is very intuitive: as we require $\phi(0)=\phi(1)=0$, the orthonormal basis to work with in order to expand $\phi(\cdot)(\omega)$ as a random Fourier series is $\{\sqrt{2}\,\sin(j\pi x)\}_{j=1}^\infty$. In this way,
\[ \phi(x)(\omega)=\sum_{j=1}^{\infty} c_j(\omega)\,\sqrt{2}\,\sin(j\pi x). \]

Expression (\ref{phigeneral}) corresponds to the Karhunen-Lo\`{e}ve expansion (\ref{klphi}), due to the uniqueness of it\footnote{Let $\{X(t):\,t\in\mathcal{T}\subseteq\mathbb{R}\}$ be a stochastic process in $\leb^2(\mathcal{T}\times\Omega)$. Suppose that $X(t)(\omega)=\sum_{j=1}^{\infty} \sqrt{\nu_j}\,\phi_j(t)\xi_j(\omega)$ in the sense of $\leb^2(\mathcal{T}\times\Omega)$. Suppose that $\{\phi_j\}_{j=1}^\infty$ is an orthonormal basis of $\leb^2(\mathcal{T})$ and $\xi_1,\xi_2,\ldots$ have zero expectation, unit variance and are pairwise uncorrelated. Then the series corresponds to the Karhunen-Lo\`{e}ve expression of $X$. Indeed, we just need to prove that $\mathcal{C}\phi_k=\nu_k\phi_k$, $k\geq1$. We have $\Cov[X(t),X(s)]=\sum_{j=1}^\infty \nu_j\,\phi_j(t)\,\phi_j(s)$. Then $\mathcal{C}\phi_k(t)=\int_{\mathcal{T}}\Cov[X(t),X(s)]\phi_k(s)\,ds=\sum_{j=1}^\infty \nu_j\,\phi_j(t)\int_{\mathcal{T}}\phi_j(s)\phi_k(s)\,ds=\nu_k\,\phi_k(t)$.}.

If $\phi$ has expression (\ref{phigeneral}) and the density function $f_{\xi_1}$ is Lipschitz on $\mathbb{R}$, then the hypotheses of Theorem \ref{teor2} and Theorem \ref{teor3} hold. Indeed, 
\begin{align}
 A_n(\omega)= {} & 2\int_0^1 \phi(y)(\omega)\sin(n\pi y)\,dy=2\sum_{j=1}^\infty \sqrt{\nu_j}\,\sqrt{2}\,\int_0^1 \sin(j\pi y)\sin(n\pi y)\,dy\,\xi_j(\omega) \nonumber \\
= {} & \sqrt{2}\,\sqrt{\nu_n}\,\xi_n(\omega), \label{A1x1}
\end{align}
so $A_1,A_2,\ldots$ are absolutely continuous and independent, which gives the hypothesis of Theorem \ref{teor2}. Also, if $\alpha^2(\omega)\leq b$ for certain $b\in\mathbb{R}$ and we denote $\phi_1(x)=\sqrt{2}\,\sin(\pi x)$ the first eigenfunction in (\ref{phigeneral}), we have
\[\hat{\phi}_1(n)=\sqrt{2}\int_0^1\sin(\pi y)\sin(n\pi y)\,\dif y=\begin{cases} 0,&\; n\neq1,\\ \frac{\sqrt{2}}{2},&\; n=1,\end{cases}\]
\[T_N(\phi_1)(x,t,\alpha^2(\omega))=\frac{\sqrt{2}}{2}\e^{-\pi^2 \alpha^2(\omega)t}\sin(\pi x)\geq\frac{\sqrt{2}}{2}\e^{-\pi^2 b\,t}\sin(\pi x)=:C(x,t)>0,  \]
for $0<x<1$ and $t>0$. This gives the hypothesis of Theorem \ref{teor3}.

Thus, we can use formulas (\ref{fr}), $f_{u_N(x,t)}(u)$, and (\ref{fnn}), $f_{u_{N,N}(x,t)}(u)$, to approximate the density of the solution $u(x,t)(\omega)$ given in (\ref{sol2}), whenever $\phi$ has the form (\ref{phigeneral}).

Let us explore a non-Gaussian process $\phi$. For example,
\[ \phi(x)(\omega)=\sum_{j=1}^{\infty} \frac{\sqrt{2}}{j^{\frac32}\sqrt{1+\log j}}\sin(j\pi x)\xi_j(\omega), \]
where $\nu_j=1/(j^3(1+\log j))$ and $\xi_1,\xi_2,\ldots$ are identically distributed and independent with 
\[ f_{\xi_1}(\xi_1)=\frac{\sqrt{2}}{\pi(1+\xi_1^4)} \]
(it can be checked that $f_{\xi_1}$ is a density function, Lipschitz on $\mathbb{R}$, such that its expectation is $0$ and variance is $1$). Let $\alpha^2\sim\text{Uniform}(1,2)$.

In Figures \ref{x05t01gen}, \ref{x07t03gen} and \ref{x07t1gen}, we show the density $f_{u_N(x,t)}(u)$ given in (\ref{fr}) for $N=2$ (left) and $N=3$ (right) at the points $(x,t)=(0.5,0.1)$, $(x,t)=(0.7,0.3)$ and $(x,t)=(0.7,1)$, respectively. In Figures \ref{x05t01KLgen}, \ref{x07t03KLgen} and \ref{x07t1KLgen}, we see the density $f_{u_{N,N}(x,t)}(u)$ given in (\ref{fnn}) for $N=3$ (left) and $N=4$ (right) at the same points as before. In Figure \ref{3Dgen}, three dimensional plots of the density $f_{u_3(x,t)}(u)$ given in (\ref{fr}) (left) and of the density $f_{u_{4,4}(x,t)}(u)$ given in (\ref{fnn}) (right) are presented, with $x=0.5$ fixed and $t\in [0.1,0.5]$ varying, to show the time evolution of the density.

In Table \ref{taulaL12}, we compare the two plots in each of the figures in order to assess convergence. In Table \ref{taulaEV2}, we approximate the expectation and variance of $u(x,t)(\omega)$ at the previous points.
	
\begin{table}[H]
\begin{center}
\begin{tabular}{|c|c|c|c|} \hline
$\leb^1$ / $(x,t)$ & $(0.5,0.1)$ & $(0.7,0.3)$ & $(0.7,1)$  \\ \hline
$\|f_{u_2(x,t)}-f_{u_3(x,t)}\|_{\leb^1(\mathbb{R})}$ & $3.27465\cdot 10^{-8}$ & $1.44267\cdot 10^{-8}$ & $0.000880742$ \\ \hline
$\|f_{u_{3,3}(x,t)}-f_{u_{4,4}(x,t)}\|_{\leb^1(\mathbb{R})}$ & $2.11166\cdot 10^{-8}$ & $4.51255\cdot 10^{-8}$ & $0.00346924$ \\ \hline
\end{tabular}
\caption{Comparison of the two plots in each of the figures. Example \ref{ex_nongaussian}.}
\label{taulaL12}
\end{center}
\end{table}

\begin{table}[H]
\begin{center}
\begin{tabular}{|c|c|c|c|} \hline
$\mathbb{E}$, $\mathbb{V}$ / $(x,t)$ & $(0.5,0.1)$ & $(0.7,0.3)$ & $(0.7,1)$  \\ \hline
$\mathbb{E}[u_3(x,t)]$ & $1.48536\cdot 10^{-17}$ & $8.5652\cdot 10^{-18}$ & $2.56555\cdot 10^{-24}$ \\ \hline
$\mathbb{V}[u_3(x,t)]$ & $0.100146$ & $0.000486562$ & $2.01166\cdot 10^{-12}$ \\ \hline
$\mathbb{E}[u_{4,4}(x,t)]$ & $-5.31302\cdot 10^{-17}$ & $1.19262\cdot 10^{-18}$ & $6.33678\cdot 10^{-25}$ \\ \hline
$\mathbb{V}[u_{4,4}(x,t)]$ & $0.100146$ & $0.00048651$ & $2.01166\cdot 10^{-12}$ \\ \hline
\end{tabular}
\caption{Simulation of the expectation and variance. Example \ref{ex_nongaussian}.}
\label{taulaEV2}
\end{center}
\end{table} 

\begin{figure}[H]
  \begin{center}
    \includegraphics[width=7cm]{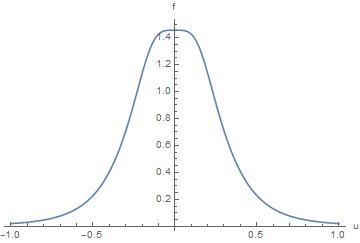}
		\includegraphics[width=7cm]{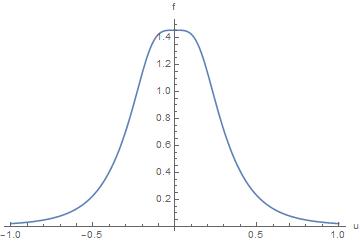}
    \caption{Density (\ref{fr}) for $N=2$ (left) and $N=3$ (right) at the point $(x,t)=(0.5,0.1)$. Example \ref{ex_nongaussian}.}
		\label{x05t01gen}
    \end{center}
  \end{figure}
	
\begin{figure}[H]
  \begin{center}
    \includegraphics[width=7cm]{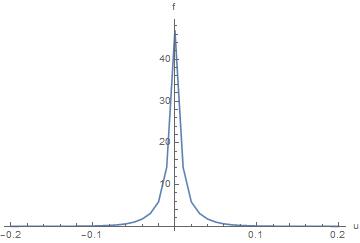}
		\includegraphics[width=7cm]{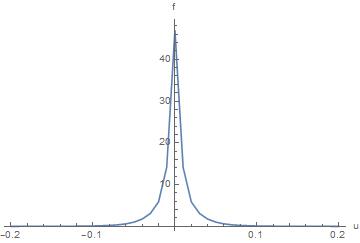}
    \caption{Density (\ref{fr}) for $N=2$ (left) and $N=3$ (right) at the point $(x,t)=(0.7,0.3)$. Example \ref{ex_nongaussian}.}
		\label{x07t03gen}
    \end{center}
  \end{figure}
	
	\begin{figure}[H]
  \begin{center}
    \includegraphics[width=7cm]{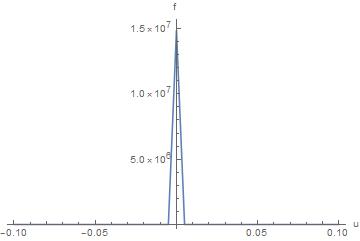}
		\includegraphics[width=7cm]{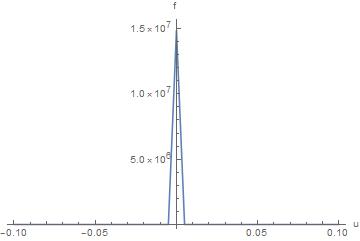}
    \caption{Density (\ref{fr}) for $N=2$ (left) and $N=3$ (right) at the point $(x,t)=(0.7,1)$. Example \ref{ex_nongaussian}.}
		\label{x07t1gen}
    \end{center}
  \end{figure}

	\begin{figure}[H]
  \begin{center}
    \includegraphics[width=7cm]{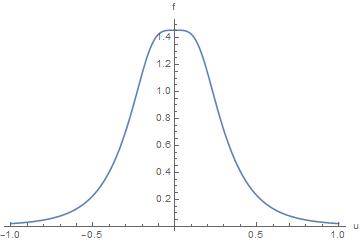}
		\includegraphics[width=7cm]{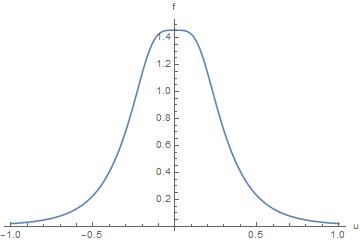}
    \caption{Density (\ref{fnn}) for $N=3$ (left) and $N=4$ (right) at the point $(x,t)=(0.5,0.1)$. Example \ref{ex_nongaussian}.}
		\label{x05t01KLgen}
    \end{center}
  \end{figure}
	
\begin{figure}[H]
  \begin{center}
    \includegraphics[width=7cm]{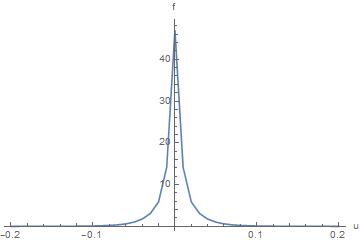}
		\includegraphics[width=7cm]{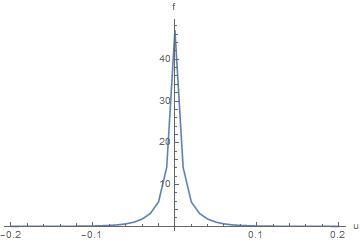}
    \caption{Density (\ref{fnn}) for $N=3$ (left) and $N=4$ (right) at the point $(x,t)=(0.7,0.3)$. Example \ref{ex_nongaussian}.}
		\label{x07t03KLgen}
    \end{center}
  \end{figure}
	
	\begin{figure}[H]
  \begin{center}
    \includegraphics[width=7cm]{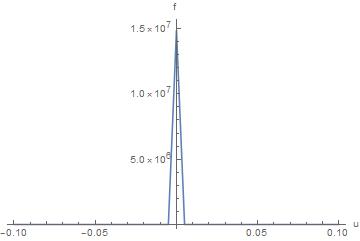}
		\includegraphics[width=7cm]{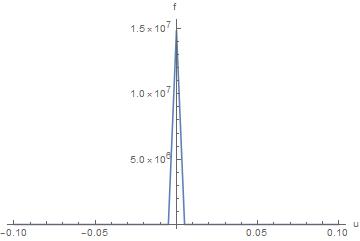}
    \caption{Density (\ref{fnn}) for $N=3$ (left) and $N=4$ (right) at the point $(x,t)=(0.7,1)$. Example \ref{ex_nongaussian}.}
		\label{x07t1KLgen}
    \end{center}
  \end{figure}
	
	\begin{figure}[H]
  \begin{center}
    \includegraphics[width=7cm]{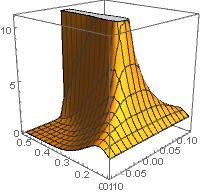}
		\includegraphics[width=7cm]{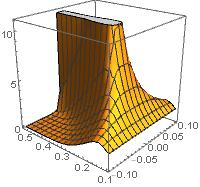}
    \caption{Density (\ref{fr}) for $N=3$ at the point $x=0.5$ and $0.1\leq t\leq 0.5$ (left) and density (\ref{fnn}) for $N=4$ at the point $x=0.5$ and $0.1\leq t\leq 0.5$ (right). Example \ref{ex_nongaussian}.}
		\label{3Dgen}
    \end{center}
  \end{figure}

\end{example}

\begin{example}	\label{ex_counterex} \normalfont
The necessity of the Lipschitz condition for $f_{A_1}$ in Theorem \ref{teor2} and for $f_{\xi_1}$ in Theorem \ref{teor3} can be analyzed numerically. Consider $\alpha^2\sim \text{Uniform}(1,2)$ and
\[ \phi(x)(\omega)=\sum_{j=1}^{\infty} \frac{\sqrt{2}}{j^{\frac32}\sqrt{1+\log j}}\sin(j\pi x)\xi_j(\omega), \]
where $\xi_1,\xi_2,\ldots$ are independent with uniform distribution on $(-\sqrt{3},\sqrt{3})$. We have that $\xi_1,\xi_2,\ldots$ have zero expectation with unit variance, but $f_{\xi_1}$ is not Lipschitz on $\mathbb{R}$, since it has a jump discontinuity at $\pm \sqrt{3}$. By (\ref{A1x1}) and Lemma \ref{lema_abscont}, 
\[ f_{A_1}(a_1)=\frac{1}{\sqrt{2}} f_{\xi_1}\left(\frac{a_1}{\sqrt{2}}\right). \]
This density function is neither Lipschitz. In Figure \ref{cN} and Table \ref{taulaL13}, it seems that density (\ref{fr}), $f_{u_N(x,t)}(u)$, does not converge. Although this is not an analytical proof, the example shows that the absence of the Lipschitz condition changes the convergence results of the numerical experiments.

\begin{table}[H]
\begin{center}
\begin{tabular}{|c|c|} \hline
$\leb^1$ / $(x,t)$ & $(0.5,0.3)$   \\ \hline
$\|f_{u_2(x,t)}-f_{u_3(x,t)}\|_{\leb^1(\mathbb{R})}$ & $0.19156$ \\ \hline
$\|f_{u_{3}(x,t)}-f_{u_{4}(x,t)}\|_{\leb^1(\mathbb{R})}$ & $1.86146$ \\ \hline
\end{tabular}
\caption{Comparison of the three plots in Figure \ref{cN}. Example \ref{ex_counterex}.}
\label{taulaL13}
\end{center}
\end{table}

\begin{figure}[H]
  \begin{center}
    \includegraphics[width=7cm]{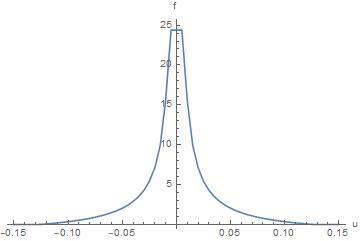}
		\includegraphics[width=7cm]{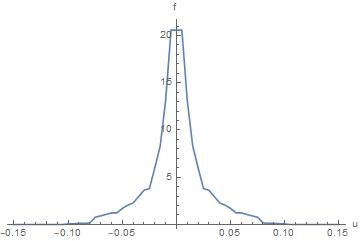}
		\includegraphics[width=7cm]{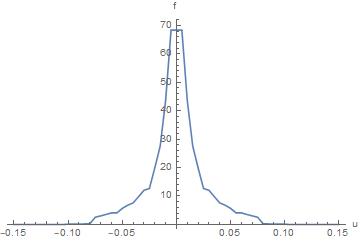}
    \caption{Density (\ref{fr}) for $N=2$ (up left), $N=3$ (up right) and $N=4$ (down) at the point $(x,t)=(0.5,0.3)$. Example \ref{ex_counterex}.}
		\label{cN}
    \end{center}
  \end{figure}

\end{example}

\section{Conclusions}

In this paper we have determined approximations of the probability density function of the solution of the randomized heat equation with homogeneous boundary conditions. This solution is a stochastic process expressed as a random series, which is obtained via the classical method of separation of variables. Three theorems, \ref{teor1}, \ref{teor2} and \ref{teor3}, illustrate the theoretical ideas of the paper. In Theorem \ref{teor1} and Theorem \ref{teor2}, we have focused on the hypotheses on the joint density of the random Fourier coefficients appearing in the random series, as well as other hypotheses on the random diffusion coefficient. In Theorem \ref{teor3}, we focused on the hypotheses on the Karhunen-Lo\`{e}ve expansion of the initial condition process, as well as other assumptions on the random diffusion coefficient. A very important hypothesis in Theorem \ref{teor2} and Theorem \ref{teor3} is concerned with a Lipschitz condition. The hypotheses of the three theorems have been established in order to prove that the approximating density functions form a uniformly Cauchy sequence. As we have seen, Theorem \ref{teor2} and Theorem \ref{teor3} give a great variety of examples. The numerical experiments evince that, under the assumptions of the theorems, the two approaches offer very similar results and a very quick convergence of the approximating density functions. The last example demonstrates numerically the necessity of the Lipschitz condition set in Theorem \ref{teor2} and Theorem \ref{teor3}.

\section*{Acknowledgements}
This work has been  supported by the Spanish Ministerio de Econom\'{i}a y Competitividad grant MTM2013-41765-P.

\section*{Conflict of Interest Statement} 
The authors declare that there is no conflict of interests regarding the publication of this article.

\end{document}